\newcommand{\NN}{\mathbb{N}}
\DeclareMathOperator{\diag}{diag}
\newcommand{\standout}[1]{\textbf{#1}}    
\tikzset{blueedge/.style={blue,ultra thick},greenedge/.style={green!65!black,ultra thick,densely dotted},pinkedge/.style={magenta,ultra thick,dashed}}
\theoremstyle{plain}
\newtheorem{theorem}{Theorem}[section]
\newtheorem{proposition}[theorem]{Proposition}
\newtheorem{lemma}[theorem]{Lemma}
\theoremstyle{definition}
\newtheorem{example}[theorem]{Example}
\newtheorem{remark}[theorem]{Remark}
\newtheorem{defn}[theorem]{Definition}
\begin{document}

\title[Metric Dimension of Generalized Hamming Graphs]{Metric Dimension of a Diagonal Family of Generalized Hamming Graphs}
\author{Briana Foster-Greenwood}
\address{Department of Mathematics and Statistics, California State Polytechnic University, Pomona, CA 91768}
\email{brianaf@cpp.edu}
\author{Christine Uhl}
\address{Department of Mathematics, St.\ Bonaventure University,
St\ Bonaventure, NY 14778}
\email{cuhl@sbu.edu}
\date{\today}
\subjclass[2020]{05C69 (Primary) 05C12, 05B30, 05C15 (Secondary)}
\keywords{metric dimension, resolving set, Hamming graph, unitary Cayley graph, hypergraph, forbidden subgraph, edge coloring}

\begin{abstract}
Classical Hamming graphs are Cartesian products of complete graphs, and two vertices are adjacent if they differ in exactly one coordinate. Motivated by connections to unitary Cayley graphs, we consider a generalization where two vertices are adjacent if they have no coordinate in common. Metric dimension of classical Hamming graphs is known asymptotically, but, even in the case of hypercubes, few exact values have been found. In contrast, we determine the metric dimension for the entire diagonal family of $3$-dimensional generalized Hamming graphs. Our approach is constructive and made possible by first characterizing resolving sets in terms of forbidden subgraphs of an auxiliary edge-colored hypergraph.
\end{abstract}

\maketitle
\section{Introduction}
Consider an $m\times n$ chessboard with some cells occupied by \standout{landmarks}. A landmark in cell $(i,j)$ \standout{sees} all other cells that are in row $i$ or column $j$. Is it possible to place landmarks on the board so that each unoccupied cell is seen by a different (possibly empty) set of landmarks? What is the minimum number of landmarks required? What if the puzzle is played in higher dimensions on an $n_1\times \cdots\times n_r$ board, where a landmark  sees all other cells that share at least one coordinate with the landmark's cell?

This optimization puzzle is equivalent to finding the metric dimension of a generalized Hamming graph. After providing background on metric dimension and Hamming graphs, we solve the $2$-dimensional puzzle using known results and then devote the rest of the paper to solving $3$-dimensional puzzles on $n\times n\times n$ boards. 

\subsection{Metric dimension}
  Let $G$ be a finite connected graph with vertex set $V$. For vertices $x,y\in V$, define the distance $d(x,y)$ to be the length of the shortest path between $x$ and $y$ in $G$. Given a subset of vertices $W\subseteq V$, whose elements are referred to as \standout{landmarks}, we say $W$ is a \standout{resolving set} (or $W$ \standout{resolves} $G$) provided that for every pair of distinct vertices $x,y\in V-W$, there exists a landmark $w\in W$ such that $d(x,w)\neq d(y,w)$. 
A minimum size resolving set is a \standout{metric basis} for $G$. The \standout{metric dimension} of $G$, denoted $\dim G$, is the size of a metric basis. 

In the graph theory context, metric dimension was introduced independently by Harary and Melter \cite{HararyMelter1976} and Slater \cite{Slater1975} in the 1970s.  Bounds and values for metric dimension and its variants have been found for many graph families. 
See \cite{Tillquist2021} for a nice survey paper on metric dimension and some applications, which include source localization (detecting the source of spread in networks), detecting network motifs, and embedding biological sequence data. See \cite{Kuziak2021} for a survey on the many variants of metric dimension.

\subsection{Hamming graphs}
While there are many generalizations of Hamming graphs, we adopt the definition from \cite{Sander2010}. For an $r$-tuple of positive integers $n_1,\ldots,n_r$ and a set of distances $K\subseteq\{1,2,\ldots,r\}$, let the \standout{generalized Hamming graph} $HG(n_1,\ldots,n_r;K)$ be the graph with vertex set $V=\{(x_1,\ldots,x_r)\mid 1\leq x_i\leq n_i\}$
 and adjacency defined by $x\sim y$ if and only if there exists $k\in K$ such that $x$ and $y$ differ in exactly $k$ coordinates. 
 
 As noted in \cite{Sander2010}, generalized Hamming graphs include Cartesian products of complete graphs
 \[HG(n_1,\ldots,n_r;1)\cong K_{n_1}\Box\cdots\Box K_{n_r}.\] In particular, this includes the classical Hamming graphs $H(d,q)$ which are the Cartesian products of $d$ copies of the complete graph $K_q$ (with the case of $q=2$ yielding hypercubes, also known as binary Hamming graphs). C\'{a}ceres, et al.~\cite{Caceres2007} determine the metric dimension of a Cartesian product of two complete graphs and bound the metric dimension of the Cartesian product of a complete graph with another graph. Research on resolvability in Hamming graphs includes, for instance, work on coin-weighing problems (e.g., \cite{ErdosRenyi1963}, \cite{Lindstrom1964}, \cite{CantorMills1966},\cite{Lu2022}), asymptotic results for metric dimension of general Cartesian powers \cite{Jiang2019}, and an integer linear programming approach for testing resolvability \cite{Laird2020}. For more details and references, see the survey \cite{Tillquist2021}. We also note that Junnila, et al. \cite{Junnila2019} determine the minimum size of self-locating-dominating codes for the Hamming graphs $H(3,q)$, which is a different problem but has a similar flavor to metric dimension.

For purposes of the puzzle introduced at the beginning of this article, we are interested in the generalized Hamming graphs $HG(n_1,\ldots,n_r;r)$, in which two vertices are adjacent if they have no coordinates in common. For $r\geq 2$ and $n_i\geq 3$, the graphs are connected with diameter two. In particular,
\[ d(x,y) = 
     \begin{cases}
       0 & \text{if $x=y$,}  \\
       1 & \text{if $x$ and $y$ have no coordinates in common,} \\
       2 & \text{if $x$ and $y$ have at least one coordinate in common.}
    \end{cases}\]
(Note that for $r\geq 2$, if $n_1=n_2=2$, the graph $HG(n_1,\ldots,n_r;r)$ is disconnected, and if $n_1=2$ and $n_2,\ldots,n_r\geq 3$, the graph is connected but has diameter $3$.)

In terms of the puzzle, we see that, for $r\geq 2$ and $n_i\geq 3$, the cells of an $n_1\times\cdots\times n_r$ board correspond to vertices in the graph $HG(n_1,\ldots,n_r;r)$, and a landmark $w$ sees vertex $x$ precisely when $d(w,x)=2$. Also, $x$ and $y$ that are not landmarks will be resolved if and only if there is a landmark that sees $x$ or $y$ but not both, i.e., $x$ and $y$ are seen by different sets of landmarks. Therefore a resolving set for the graph $HG(n_1,\ldots,n_r;r)$ corresponds to a solution to the puzzle, and the metric dimension is the minimum number of landmarks required to solve the puzzle.

Note that, alternatively, we could use the graphs $HG(n_1,\ldots,n_r;1,\ldots,r-1)$. These graphs also have diameter two, but the criteria for distance one and distance two are swapped. It follows that the graphs
$HG(n_1,\ldots,n_r;r)$ and $HG(n_1,\ldots,n_r;1,\ldots,r-1)$ have the same resolving sets and metric dimension, so we could use either graph to represent our puzzles.

The generalized Hamming graphs under consideration can be expressed in terms of various graph operations involving Cartesian products, complements, power graphs, and exact distance graphs. Given a graph $G$, we can construct the \standout{graph complement} $\overline{G}$, the \standout{$k$-th power graph} $G^{(k)}$, and the \standout{exact distance-$k$ graph} $G^{[\natural k]}$. All three graphs share the same vertex set as $G$. Distinct vertices are adjacent in the $k$-th power graph if they are at most distance $k$ apart in $G$, while adjacency in the exact distance-$k$ graph requires a distance of exactly $k$. By \cite[Lemma 3.1]{Sander2010}, the graphs $HG(n_1,\ldots,n_r;r)$ and $HG(n_1,\ldots,n_r;1,\ldots,r-1)$ are complements of each other and we have the isomorphisms
\[HG(n_1,\ldots,n_r;r)\cong(K_{n_1}\Box\cdots \Box K_{n_r})^{[\natural r]}\]
and 
\[HG(n_1,\ldots,n_r;1,\ldots,r-1)\cong (K_{n_1}\Box\cdots \Box K_{n_r})^{(r-1)}.\]
For structural results on exact distance graphs of product graphs, see \cite{Bresar2019}.
The above isomorphisms will be useful in solving the $2$-dimensional puzzle.

\subsection{Two- and three-dimensional puzzles}
By the remarks in the previous subsection, we can solve the $2$-dimensional puzzle on an $m\times n$ board by finding the metric dimension and minimum resolving sets of the graph  $HG(m,n;2)\cong(K_m\Box K_n)^{[\natural 2]}$ or $HG(m,n;1)\cong K_m\Box K_n$. 
Choosing the latter, we can apply \cite[Theorem 6.1]{Caceres2007} that for all $n\geq m\geq 1$,
\[\dim(K_m\square K_n)=\begin{cases}
     \lfloor\frac{2}{3}(n+m-1)\rfloor & \text{if $m\leq n\leq 2m-1$} \\
     n-1 & \text{if $n\geq2m-1$.}
  \end{cases}\]
Moreover, \cite[Lemma 6.2]{Caceres2007} provides necessary and sufficient conditions for a subset of vertices $S$ to be resolving based on relationships between the elements of $S$.

To solve the puzzle on an $n_1\times n_2\times n_3$ board, we want to find the metric dimension of $HG(n_1,n_2,n_3;3)\cong(K_{n_1}\Box K_{n_2}\Box K_{n_3})^{[\natural 3]}$ or $HG(n_1,n_2,n_3;1,2)\cong (K_{n_1}\Box K_{n_2}\Box K_{n_3})^{(2)}$. We are
unaware of any results that give the metric dimension of these graphs, so that will be the focus of the remainder of the paper.
In \cref{sec:lowerbound}, we find a lower bound for the metric dimension.  In \cref{sec:landmarksystems}, we focus our attention on the diagonal family $HG(n,n,n;3)$ and develop a characterization of resolving sets in terms of forbidden edge-colored subgraphs of an auxiliary hypergraph. In \cref{sec:diagonal}, we apply the theorems from \cref{sec:landmarksystems} to construct minimum resolving sets for diagonal Hamming graphs. We end with the comprehensive \cref{thm:ComprehensiveMetDim} for the metric dimension of the Hamming graphs $HG(n,n,n;3)$ for $n \geq 3$ and indicate further research directions with connections to unitary Cayley graphs.  

Throughout the paper, let $\mathcal{N}=\{(n_1,n_2,n_3)\in\NN^3\mid n_1,n_2,n_3\geq 3 \}$ and let the diagonal be $\diag(\mathcal{N}) = \{(n,n,n)\mid n\geq 3\}$. We commonly denote an element of $\mathcal{N}$ as $\mathbf{n}=(n_1,n_2,n_3)$ and increment each coordinate to get $\mathbf{n}+\mathbf{1}=(n_1+1,n_2+1,n_3+1)$. For $n\in\NN$, we let $[n]=\{1,2,\ldots,n\}$.

\section{Lower Bound}\label{sec:lowerbound}
In this section, we prove a lower bound for the metric dimension of the Hamming graphs $HG(n_1,n_2,n_3;3)$.

Given $\mathbf{n}\in\mathcal{N}$ and a subset of vertices $W$ of the Hamming graph $HG(\mathbf{n};3)$, let $W_{i,a}$ be the set of landmarks in the plane $x_i=a$, i.e., 
\[W_{i,a}=\{(x_1,x_2,x_3)\in W\mid x_i=a\}\]
for $1\leq i\leq 3$ and $a\in[n_i]$.
We call the sets $W_{i,a}$ \standout{blocks of color $i$}.

We begin with a constraint on the minimum number of landmarks in the blocks of a given color. Namely,
if $W$ is a resolving set, then the absence of landmarks in a block forces the other blocks of that color to each contain at least three landmarks.  Additionally, if a block has only one landmark in it, then the other blocks of that color are forced to each contain at least two landmarks.  
\begin{lemma}\label{parallel}
If $W$ is a resolving set for the Hamming graph $G=HG(n_1,n_2,n_3;3)$ (each $n_i\geq 3$), then \[|W_{i,a}|+|W_{i,b}|\geq 3\]
for all $1\leq i\leq 3$ and distinct $a,b\in [n_i]$.
\end{lemma}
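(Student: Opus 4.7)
The plan is to prove the contrapositive by contradiction: assume that for some index $i$ and distinct $a,b\in[n_i]$ we have $|W_{i,a}|+|W_{i,b}|\leq 2$, and then exhibit two non-landmark vertices that no landmark in $W$ distinguishes. By the obvious symmetry in the coordinate index, we may take $i=1$. The search is restricted to pairs of the special form $x=(a,c,d)$ and $y=(b,c,d)$, which differ only in the first coordinate; note $x\neq y$.

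The first observation is that since $x$ and $y$ agree in the last two coordinates, any landmark $w$ with $w_1\notin\{a,b\}$ sees $x$ iff it sees $y$ (in both cases via $w_2=c$ or $w_3=d$), so such landmarks do not resolve $x,y$. Thus only landmarks in $W_{1,a}\cup W_{1,b}$ can possibly distinguish the pair. A landmark $p=(a,c_p,d_p)\in W_{1,a}$ automatically sees $x$, so it fails to distinguish $\{x,y\}$ iff it also sees $y$, i.e., iff $c_p=c$ or $d_p=d$; the symmetric statement applies to landmarks in $W_{1,b}$. The task therefore reduces to selecting $(c,d)\in[n_2]\times[n_3]$ such that (i) every $w\in W_{1,a}\cup W_{1,b}$ agrees with $(c,d)$ in at least one of its last two coordinates, and (ii) $(c,d)$ differs from the projection $(c_w,d_w)$ of every such $w$, the latter ensuring that $x,y$ are themselves not in $W$.

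Let $P\subseteq[n_2]\times[n_3]$ be the set of distinct projections of landmarks in $W_{1,a}\cup W_{1,b}$, so $|P|\leq 2$. The argument splits on $|P|$. If $|P|=0$, any $(c,d)$ works. If $|P|=1$, say $P=\{(c_0,d_0)\}$, take $c=c_0$ and any $d\neq d_0$, which exists since $n_3\geq 3$. If $|P|=2$ with projections $(c_1,d_1),(c_2,d_2)$: when $c_1=c_2$, take $c=c_1$ and $d\notin\{d_1,d_2\}$ (valid by $n_3\geq 3$); when $d_1=d_2$, the mirror choice works via $n_2\geq 3$; and when $c_1\neq c_2$ and $d_1\neq d_2$, the off-diagonal point $(c,d)=(c_1,d_2)$ satisfies the agreement condition for both projections and is automatically distinct from each of them. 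The main (mild) obstacle is keeping the case analysis uniform; it turns out to be uniform because the conditions depend only on projections and not on which of the two blocks a landmark sits in. The hypothesis $n_i\geq 3$ is used precisely to guarantee enough room in the last two coordinates to avoid the forbidden projections.
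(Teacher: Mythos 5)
Your proof is correct and takes essentially the same approach as the paper: restrict to pairs $(a,c,d)$, $(b,c,d)$, note that only landmarks in $W_{1,a}\cup W_{1,b}$ can separate them, and choose $(c,d)$ to meet each relevant landmark's projection in exactly one of the last two coordinates (your choices $(c_1,d_2)$, etc., match the paper's $(x_1,y_2)$, $(x_1,y_3)$, $(x_3,y_1)$ exactly). Your organization by the set of projections is marginally more uniform, since it absorbs the paper's separate $(0,2)$ and $(1,1)$ cases and avoids its add-a-landmark argument for sums below $2$, but the underlying idea is identical.
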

\begin{proof}
  Let $W$ be a subset of the vertex set of $G$ and let $i=1$ (the proofs for $i=2,3$ are analogous). 
  First suppose $|W_{1,a}|=0$ and $|W_{1,b}|=2$ for some $a,b\in[n_1]$. 
  Say the two landmarks in $W_{1,b}$ are $(b,x_1,y_1)$ and $(b,x_2,y_2)$.
  Now, since $n_2,n_3\geq 3$, we can choose $x_3\in[n_2]-\{x_1,x_2\}$ and $y_3\in[n_3]-\{y_1,y_2\}$ and define $(x,y)$ as follows:
    \[(x,y)=\begin{cases}(x_1,y_2) &\text{if $x_1\neq x_2$ and $y_1\neq y_2$}\\
    (x_1,y_3)&\text{if $x_1=x_2$}\\
    (x_3,y_1)&\text{if $y_1=y_2$.}\end{cases}\]
    To resolve the vertices $(a,x,y)$ and $(b,x,y)$, we would need a landmark that has a coordinate in common with one of the vertices but not the other, i.e., a landmark from $W_{1,a}\cup W_{1,b}$. But $W_{1,a}$ is empty, and, by construction, $(x,y)$ has exactly one coordinate in common with $(x_1,y_1)$ and exactly one coordinate in common with $(x_2,y_2)$, which implies $(a,x,y)$ and $(b,x,y)$ are distance two from the elements of $W_{1,b}$. Hence $(a,x,y)$ and $(b,x,y)$ are unresolved.
    
    Next suppose $|W_{1,a}|=1$ and $|W_{1,b}|=1$. Say the landmarks in $W_{1,a}\cup W_{1,b}$ are $(a,x_1,y_1)$ and $(b,x_2,y_2)$. Defining $(x,y)$ as in the previous case, we can similarly show that $(a,x,y)$ and $(b,x,y)$ are distance two to the landmarks in $W_{1,a}\cup W_{1,b}$ and are hence unresolved.
    
    Finally, if $|W_{1,a}|+|W_{1,b}|<2$, the case is all the worse, for if $W$ was resolving, then we could add another landmark to $W_{1,a}\cup W_{1,b}$ to create a resolving set $W'$ with $|W'_{1,a}|+|W'_{1,b}|=2$, a contradiction.
\end{proof}

We next use the pairwise bound from \cref{parallel} to determine a lower bound on the total number of landmarks required to resolve Hamming graphs $HG(\mathbf{n};3)$.

\begin{theorem}\label{cor:lowerbound}
    For $n_3\geq n_2\geq n_1\geq 3$, the Hamming graph $G=HG(n_1,n_2,n_3;3)$ cannot be resolved with fewer than $2n_3-1$ landmarks.
\end{theorem}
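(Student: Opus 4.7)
The plan is to pick the color direction with the largest index, namely $i=3$, and use \cref{parallel} to show that the blocks of color $3$ already force $|W|\geq 2n_3-1$, without needing to do anything clever across colors. The key observation is that $|W|=\sum_{a=1}^{n_3}|W_{3,a}|$, since each vertex has exactly one $x_3$-coordinate, so a lower bound on the total block sizes of a single color is automatically a lower bound on $|W|$.

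Next I would translate \cref{parallel} into a structural statement about the multiset of block sizes $|W_{3,1}|,\ldots,|W_{3,n_3}|$. The pairwise inequality $|W_{3,a}|+|W_{3,b}|\geq 3$ implies that at most one block of color $3$ can have size $\leq 1$: two blocks of size $1$ (or smaller) would violate \cref{parallel}. So I would split into cases based on the minimum block size $m=\min_a |W_{3,a}|$. If $m\geq 2$, then $|W|\geq 2n_3$. If $m=1$, the unique smallest block contributes $1$ and each of the remaining $n_3-1$ blocks contributes at least $2$, giving $|W|\geq 1+2(n_3-1)=2n_3-1$. If $m=0$, then each of the other $n_3-1$ blocks must contain at least $3$ landmarks (to pair with the empty block), so $|W|\geq 3(n_3-1)$, and this is $\geq 2n_3-1$ precisely because $n_3\geq 3$.

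In every case $|W|\geq 2n_3-1$, completing the proof. There is no real obstacle here; the only mild subtlety is remembering that the hypothesis $n_3\geq 3$ (and not just $n_3\geq 2$) is exactly what makes the empty-block case compatible with the bound, and that the hypothesis $n_3\geq n_2\geq n_1$ is used only to justify that color $3$ gives the strongest of the three analogous bounds $|W|\geq 2n_i-1$.
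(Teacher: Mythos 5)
Your proposal is correct and follows essentially the same route as the paper: sum the block sizes of a single color, case on the minimum block size ($0$, $1$, or $\geq 2$) using \cref{parallel}, and observe that $n_3\geq 3$ makes $3(n_3-1)\geq 2n_3-1$. The only cosmetic difference is that you fix color $3$ up front, whereas the paper derives the bound $2n_i-1$ for each color and then notes that $i=3$ gives the strongest one.
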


\begin{proof}
    Suppose $W$ resolves $G$. For $1\leq i\leq 3$, the sum $|W_{i,1}|+|W_{i,2}|+\cdots+|W_{i,n_i}|$ is the total number of landmarks in $W$. If the minimum term in this sum is zero, \cref{parallel} says the remaining terms are greater than or equal to three, giving at least $3(n_i-1)$ landmarks. If the minimum term is one, \cref{parallel} says the remaining terms are greater than or equal to two, giving at least $1+2(n_i-1)$ landmarks. If the minimum term is two, we have at least $2n_i$ landmarks. Note that, under the assumption $n_i\geq 3$, we have
    \[2n_i-1<2n_i\leq 3(n_i-1).\]
    So, regardless of the minimum term in the sum, we have $2n_i-1$ as a lower bound on the number of landmarks in $W$. In particular, $2n_3-1$ is a lower bound (and with the assumption $n_3\geq n_2\geq n_1$, it is the greatest of the lower bounds $2n_i-1$).
\end{proof}

In \cref{sec:diagonal}, we will show the bound in \cref{cor:lowerbound} is sharp by resolving the Hamming graphs $HG(n,n,n;3)$ for $n\geq 5$ in $2n-1$ landmarks.
In preparation, we develop an equivalent characterization of resolving sets.

\section{Landmark Graphs and Forbidden Configurations}\label{sec:landmarksystems}

A priori, determining whether a subset of vertices $W$ is a resolving set for a Hamming graph $HG(\mathbf{n};3)$ involves checking whether all pairs of vertices are resolved. Instead of doing this directly, we shift our focus to relationships between the elements of $W$. We define an edge-colored hypergraph based on $W$ and then characterize whether $W$ is resolving in terms of forbidden edge-colored subgraphs. This characterization will be used in \cref{sec:diagonal} to construct minimum resolving sets for the diagonal family $HG(n,n,n;3)$. 

\subsection{Landmark graphs and systems}
As in \cref{sec:lowerbound}, given a subset of vertices $W$ in a Hamming graph $HG(\mathbf{n};3)$, let $W_{i,a}$ be the set of elements in $W$ whose $i$-th coordinate is $a$.  
To highlight relationships between the coordinates of elements of $W$, we define the \standout{landmark graph $\mathcal{G}(W)$} to be the hypergraph with vertex set $W$ and hyperedges those $W_{i,a}$ that are nonempty. We often refer to $\mathcal{G}(W)$ as the graph of $W$. The graph of $W$ is $3$-regular and has a proper $3$-edge-coloring obtained by assigning color $i$ to the sets $W_{i,a}$. Note that the hyperedges of color $i$ form a partition of $W$. In practice, we will think of the first color as blue, second color as green, and third color as pink. As an example, \cref{fig:landmarkgraph} shows the landmark graph of a resolving set for $HG(3,3,3;3)$.

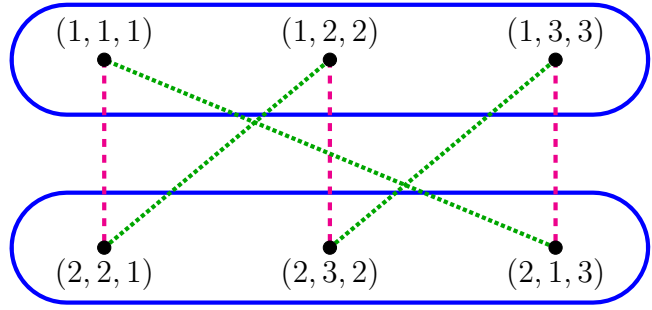
\begin{figure}
    \centering
\begin{tikzpicture}[scale=1]
    \draw[blueedge,double distance=40pt,line cap=round] (-.5,2.5)--(6.5,2.5);
    \draw[blueedge,double distance=40pt,line cap=round] (-.5,0)--(6.5,0);
    \draw[pinkedge] (0,2.5)--(0,0);
    \draw[pinkedge] (3,2.5)--(3,0);
    \draw[pinkedge] (6,2.5)--(6,0);
    \draw[greenedge] (0,0)--(3,2.5);
    \draw[greenedge] (3,0)--(6,2.5);
    \draw[greenedge] (6,0)--(0,2.5);
    \draw[fill=black] (0,2.5) circle (2.5pt) node[above] {$(1,1,1)$};
    \draw[fill=black] (3,2.5) circle (2.5pt) node[above] {$(1,2,2)$};
    \draw[fill=black] (6,2.5) circle (2.5pt) node[above] {$(1,3,3)$};
    \draw[fill=black] (0,0) circle (2.5pt) node[below] {$(2,2,1)$};
    \draw[fill=black] (3,0) circle (2.5pt) node[below] {$(2,3,2)$};
    \draw[fill=black] (6,0) circle (2.5pt) node[below] {$(2,1,3)$};
\end{tikzpicture}
    \caption{Landmark graph $\mathcal{G}(W)$ of a resolving set for $HG(3,3,3;3)$. The six landmarks in $W$ are $(1,1,1)$, $(1,2,2)$, $(1,3,3)$, $(2,2,1)$, $(2,3,2)$, and $(2,1,3)$. Landmarks in the same blue (solid) hyperedge have the same first coordinate; landmarks connected by a green (dotted) edge have the same second coordinate; and landmarks connected by a pink (dashed) edge have the same third coordinate.}
    \label{fig:landmarkgraph}
\end{figure}

The following lemma tells us a necessary condition on the structure of the landmark graph if a diagonal Hamming graph is to be resolved with the lower bound of $2n-1$ landmarks. We use the phrase ``plain edge'' to emphasize an edge with two endpoints (as opposed to a loop or larger hyperedge).

\begin{lemma}\label{howmany}
   Let $n\geq3$. If $W$ is a resolving set of $HG(n,n,n;3)$ of size $2n-1$,
   then the landmark graph $\mathcal{G}(W)$ has exactly one loop of each color and exactly $(n-1)$ plain edges of each color.
\end{lemma}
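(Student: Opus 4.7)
The plan is to fix a color $i \in \{1,2,3\}$ and analyze the sizes of the blocks $W_{i,1}, \ldots, W_{i,n}$, which form a partition of $W$. Since $|W| = 2n-1$, the block sizes for color $i$ sum to $2n-1$, so it suffices to show that exactly one such block has size $1$ and the other $n-1$ blocks each have size $2$: a size-$1$ block is a loop and a size-$2$ block is a plain edge in $\mathcal{G}(W)$.

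First I would rule out empty blocks. If some $W_{i,a} = \emptyset$, then by \cref{parallel} every other block $W_{i,b}$ (with $b \neq a$) satisfies $|W_{i,b}| \geq 3$, giving a total of at least $3(n-1)$ landmarks of color $i$. For $n \geq 3$ we have $3(n-1) > 2n - 1$, a contradiction. So every block of color $i$ is nonempty.

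Next, let $m$ denote the minimum size among $|W_{i,1}|, \ldots, |W_{i,n}|$, and suppose $|W_{i,a}| = m$. \cref{parallel} forces $|W_{i,b}| \geq 3 - m$ for every $b \neq a$. If $m \geq 2$, then the total is at least $2n > 2n-1$, impossible. Hence $m = 1$, each other block has size at least $2$, and the total is at least $1 + 2(n-1) = 2n-1$. Equality forces every block other than $W_{i,a}$ to have size exactly $2$. Moreover, the size-$1$ block is unique: two distinct size-$1$ blocks would violate \cref{parallel} directly, since their sizes would sum to $2 < 3$.

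Applying this argument to each of the three colors gives exactly one size-$1$ hyperedge (a loop) and exactly $n-1$ size-$2$ hyperedges (plain edges) of each color, as claimed. The argument is essentially a sharpening of the counting already carried out in the proof of \cref{cor:lowerbound}; the main (minor) subtlety is just recording that, once the totals match the lower bound, \cref{parallel} pins down the block-size multiset to $(1, 2, 2, \ldots, 2)$ rather than merely bounding it, so no hyperedges of size $\geq 3$ can appear.
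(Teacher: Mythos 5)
Your proof is correct and follows essentially the same route as the paper's: both arguments use \cref{parallel} to rule out an empty block (which would force $3(n-1)>2n-1$ landmarks) and a minimum block size of $2$ or more (which would force $2n$), leaving exactly one block of size $1$ and $n-1$ blocks of size $2$ per color. Your write-up is slightly more explicit about why equality pins down the multiset of block sizes, but the content is the same.
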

\begin{proof}
  Suppose $W$ is a resolving set of $HG(n,n,n;3)$ of size $2n-1$.
  First, note that if the $n$ blocks of color $i$ all have size $2$ or greater, then $|W|\geq 2n$, a contradiction. Hence there must be at least one block of size $0$ or $1$. But if there is a block of color $i$ that has size $0$, then, by \cref{parallel}, the other blocks of color $i$ all have size $3$ or greater, which implies $|W|\geq 3(n-1)> 2n-1$, also a contradiction. Thus there must be a block $W_{i,a}$ of size $1$. Then, by \cref{parallel}, all other blocks $W_{i,b}$ with $b\neq a$ have size at least $2$. Moreover, to get exactly $2n-1$ elements in $W$, the sizes of $W_{i,b}$ for $b\neq a$ cannot exceed $2$.
\end{proof}

To optimally resolve the graphs in the diagonal family, we are interested in subsets $W$ that will produce a graph $\mathcal{G}(W)$ with the structure given in \cref{howmany}. Our strategy is to start with graphs that produce an equal number of plain edges of each color and then add on loops. Though not necessary, we attempt to put all loops at the same vertex to make the analysis and constructions easier. This leads us to the following definition.

\begin{defn}
Let $n\geq 3$ and $\mathbf{n}=(n,n,n)$. For $W$ a subset of vertices of the graph $HG(\mathbf{n};3)$, we say $W$ is a \standout{2-basic landmark system} provided that:
\begin{enumerate}
    \item $|W_{i,a}|=2$ for all $1\leq i\leq 3$ and $a\in[n]$; and
    \item $|W_{i,a}\cap W_{j,b}|\leq 1$ for all $1\leq i < j\leq 3$ and $a,b\in[n]$.
\end{enumerate}
Letting $u=(n+1,n+1,n+1)$, we can extend a $2$-basic landmark system for $HG(\mathbf{n};3)$ to a \standout{triple-looped landmark system} $W\cup\{u\}$ for $HG(\mathbf{n}+\mathbf{1};3)$. 
\end{defn}

Condition (1) ensures that $\mathcal{G}(W)$ only has edges with two endpoints, and condition (2) ensures that $\mathcal{G}(W)$ has no double edges. Thus, when $W$ is a $2$-basic landmark system, $\mathcal{G}(W)$ is a simple graph with $n$ edges of each color. Note that the definition of a landmark system makes no assertion as to whether $W$ is a resolving set. Our goal is to find further conditions, based on subgraphs of $\mathcal{G}(W)$, which will allow us to determine if $W$ is a resolving set.

\subsection{Subgraphs}
Before continuing, we establish notation and definitions for subgraphs.
We denote a path with $n$ vertices as $P_n$, a cycle with $n$ vertices as $C_n$, and a complete bipartite graph with parts of size $m$ and $n$ as $K_{m,n}$. Some of our graphs will have loops and multiple edges, so we denote a vertex with $n$ loops as $L_n$ and a double edge between the same two vertices as $D_2$. We write $G\cup H$ for the disjoint union of graphs $G$ and $H$.

We say edge $e$ \standout{covers} vertex $u$ if $u\in e$. A subset of edges $E'$ determines an \standout{edge-induced subgraph} consisting of the edges in $E'$ and the vertices they cover.
In a graph with an edge-coloring, a \standout{rainbow subgraph} is a subgraph whose edges are all different colors. 
We can identify (almost) every vertex in a Hamming graph with an edge-induced rainbow subgraph of a landmark graph.

\begin{defn}
Let $\mathbf{n}\in\mathcal{N}$ and let $W$ be a subset of vertices of the Hamming graph $HG(\mathbf{n};3)$. 
Given a vertex $\alpha = (a_1,a_2,a_3)$ of the Hamming graph, define the \standout{footprint of $\alpha$} (relative to $W$) to be the subgraph of $\mathcal{G}(W)$ induced by the edges $W_{i,a_i}$. 
If $W_{i,a_i} = \varnothing$ for all $1 \leq i \leq 3$, then there is no edge-induced subgraph and $\alpha$ has no footprint.
\end{defn}

\begin{figure}
    \centering
  \hfill\begin{tikzpicture} 
      \draw[blueedge] ({330+120}:.75)--({330+240}:.75);
    \draw[greenedge] ({330+240}:.75)--({330}:.75);
            \draw[pinkedge] ({330+120}:.75)--({330}:.75);
\foreach \x in {1,2,3}
\draw[fill=white] ({330+120*\x}:.75) circle (2pt);
    \end{tikzpicture}\hfill
    \begin{tikzpicture} 
    \draw[blueedge] (0,0)--(0,1);
    \draw[pinkedge] (0,1)--(1,1);
    \draw[greenedge] (0,0)--(1,0);
       \draw[fill=white] (0,0) circle (2pt);
       \draw[fill=white] (1,0) circle (2pt);
       \draw[fill=white] (1,1) circle (2pt);
       \draw[fill=white] (0,1) circle (2pt);
    \end{tikzpicture}\hfill
    \begin{tikzpicture} 
      \draw[blueedge] ({18+72}:.75)--({18+144}:.75);
    \draw[greenedge] ({18+72}:.75)--({18+0}:.75);
            \draw[pinkedge] ({18+3*72}:.75)--({18+4*72}:.75);
\foreach \x in {1,2,3,4,5}
\draw[fill=white] ({18+72*\x}:.75) circle (2pt);
    \end{tikzpicture}\hfill
    \begin{tikzpicture} 
        \draw[blueedge] ({120}:.75)--({180}:.75);
        \draw[greenedge] ({240}:.75)--({300}:.75);
        \draw[pinkedge] ({0}:.75)--({60}:.75);
        \foreach \x in {1,2,3,4,5,6}
            \draw[fill=white] ({60*\x}:.75) circle (2pt);
    \end{tikzpicture}\hfill\text{}\vskip36pt\hfill
    \begin{tikzpicture} 
        \draw[blueedge] (0,0) to[out=135,in=180] (0,0.5);
        \draw[blueedge] (0,0) to[out=45,in=0] (0,0.5);
        \draw[greenedge] (0,0) to[out=180,in=180] (0,0.75);
        \draw[greenedge] (0,0) to[out=0,in=0] (0,0.75);
        \draw[pinkedge] (1,0)--(1,1);
        \draw[fill=white] (0,0) circle (2pt);
        \draw[fill=white] (1,0) circle (2pt);
        \draw[fill=white] (1,1) circle (2pt);
    \end{tikzpicture}\hfill
    \begin{tikzpicture} 
        \draw[blueedge] (0,0)--(0,1);
        \draw[pinkedge] (0,1)--(1,1);
        \draw[greenedge] (1,0) to[out=135,in=180] (1,.5);
        \draw[greenedge] (1,0) to[out=45,in=0] (1,.5);
        \draw[fill=white] (0,0) circle (2pt);
        \draw[fill=white] (1,0) circle (2pt);
        \draw[fill=white] (1,1) circle (2pt);
        \draw[fill=white] (0,1) circle (2pt);
    \end{tikzpicture}\hfill
    \begin{tikzpicture} 
      \draw[blueedge] ({18+72}:.75)--({18+144}:.75);
      \draw[greenedge] ({18}:.75) to[out=135,in=180] ++(0,0.5);
      \draw[greenedge] ({18}:.75) to[out=45,in=0] ++(0,.5);
      \draw[pinkedge] ({18+3*72}:.75)--({18+4*72}:.75);
      \foreach \x in {1,2,3,4,5}
          \draw[fill=white] ({18+72*\x}:.75) circle (2pt);
    \end{tikzpicture}\hfill\text{}
 \caption{If $W$ is a $2$-basic landmark system, then there are four possible footprints for a non-landmark: rainbow $C_3$, $P_4$, $P_3\cup P_2$, and $3P_2$. If $W$ is a triple-looped landmark system, then there are three additional possible footprints: rainbow $L_2\cup P_2$, $P_3\cup L_1$, and $2P_2\cup L_1$. Different line styles correspond to different edge colors.}
    \label{fig:dinosaurtracks}
\end{figure}
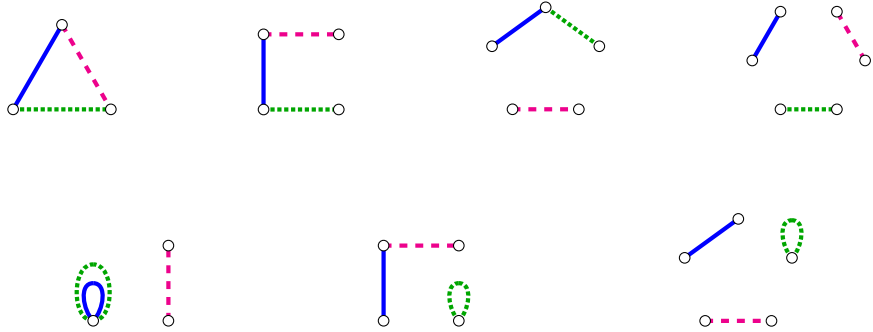

In \cref{fig:dinosaurtracks}, we show the possible footprints of a non-landmark vertex in a Hamming graph $HG(\mathbf{n};3)$. There are four possible footprints assuming that $W$ is a $2$-basic landmark system, and three additional footprints assuming $W$ is a triple-looped landmark system. The vertices of the footprint of $\alpha$ are the landmarks that have a coordinate in common with $\alpha$,  i.e., the vertices in $W$ that have distance $2$ from $\alpha$ in $HG(\mathbf{n};3)$. Notice that the footprint of a landmark has three edges/loops that share a common vertex, which in the case of a triple-looped landmark system can either be a $K_{1,3}$ or an $L_3$. We omit these from \cref{fig:dinosaurtracks} since landmarks are automatically resolved.

Because we are working with Hamming graphs of diameter two, the following straightforward but important observation allows us to determine whether $W$ is resolving by examining how footprints overlap.

\begin{remark}
  Let $W$ be a subset of vertices of a Hamming graph $HG(\mathbf{n};3)$.
  Note that in the Hamming graph, a vertex $(a_1,a_2,a_3)\notin W$ has distance two to the landmarks in $W_{1,a_1}\cup W_{2,a_2}\cup W_{3,a_3}$ and distance one to the remaining landmarks in $W$. 
  It follows that distinct vertices $\alpha=(a_1,a_2,a_3)$ and $\beta=(b_1,b_2,b_3)$ not in $W$ are resolved if and only if 
  \[W_{1,a_1}\cup W_{2,a_2}\cup W_{3,a_3}\neq W_{1,b_1}\cup W_{2,b_2}\cup W_{3,b_3},\]
  i.e., the footprints of $\alpha$ and $\beta$ cover different sets of vertices.
\end{remark}

\subsection{Forbidden configurations for two-basic landmark systems}

We are now ready to characterize whether $W$ is resolving based on properties of the landmark graph of $W$. In particular, we will find necessary and sufficient conditions in terms of forbidden edge-colored subgraphs.  We begin with two examples of configurations that cannot occur in the landmark graph of a resolving set.  

\begin{example}[Forbidden $4$-cycle]\label{ex:fourcycle}
  Let $\mathbf{n}\in\mathcal{N}$ and let $W$ be a subset of vertices of the graph $HG(\mathbf{n};3)$.
  Suppose the graph of $W$ contains a $4$-cycle, as illustrated in \cref{fig:verboten}, with opposite blue edges $W_{1,a_1}=\{w_1,w_2\}$ and $W_{1,b_1}=\{w_3,w_4\}$, a green edge $W_{2,a_2}=\{w_2,w_3\}$, and a pink edge $W_{3,a_3}=\{w_1,w_4\}$. Then the vertices $(a_1,a_2,a_3)$ and $(b_1,a_2,a_3)$ are unresolved in $HG(\mathbf{n};3)$ since neither is a landmark and \[W_{1,a_1}\cup W_{2,a_2}\cup W_{3,a_3}=\{w_1,w_2,w_3,w_4\}=W_{1,b_1}\cup W_{2,a_2}\cup W_{3,a_3}.\]
  Similarly, if the edge colors are permuted, there will be an unresolved pair of vertices.
\end{example}

\begin{example}[Forbidden $6$-cycle]\label{ex:plainhex}
  Let $\mathbf{n}\in\mathcal{N}$ and let $W$ be a subset of vertices of the graph $HG(\mathbf{n};3)$.
  Suppose the graph of $W$ contains a $6$-cycle, as illustrated in \cref{fig:verboten}, with opposite blue edges $W_{1,a_1}=\{w_1,w_2\}$ and $W_{1,b_1}=\{w_4,w_5\}$; opposite green edges  $W_{2,a_2}=\{w_5,w_6\}$ and $W_{2,b_2}=\{w_2,w_3\}$; and opposite pink edges $W_{3,a_3}=\{w_3,w_4\}$ and $W_{3,b_3}=\{w_1,w_6\}$. Then the vertices $(a_1,a_2,a_3)$ and $(b_1,b_2,b_3)$ are unresolved in $HG(\mathbf{n};3)$ since neither is a landmark and
  \[W_{1,a_1}\cup W_{2,a_2}\cup W_{3,a_3}=\{w_1,w_2,w_3,w_4,w_5,w_6\}=W_{1,b_1}\cup W_{2,b_2}\cup W_{b_3}.\]
  \end{example}

\begin{figure}
    \centering
    \hfill
    \begin{tikzpicture}[scale=1.75] 
        \draw[blueedge] (0,0)--(0,1);
        \draw[blueedge] (1,0)--(1,1);
        \draw[pinkedge] (0,1)--(1,1);
        \draw[greenedge] (0,0)--(1,0);
        \draw[fill=white] (0,0) circle (2pt);
        \draw[fill=white] (0,1) circle (2pt);
        \draw[fill=white] (1,1) circle (2pt);
        \draw[fill=white] (1,0) circle (2pt);
    \end{tikzpicture}\hfill
    \begin{tikzpicture}[scale=1.75] 
        \draw[blueedge] (60:.75)--(120:.75);
        \draw[blueedge] (240:.75)--(300:.75);
        \draw[greenedge] (0:.75)--(60:.75);
        \draw[greenedge] (180:.75)--(240:.75);
        \draw[pinkedge] (300:.75)--(360:.75);
        \draw[pinkedge] (120:.75)--(180:.75);
        \foreach \x in {1,...,6}
            \draw[fill=white] ({60*\x}:.75) circle (2pt);
    \end{tikzpicture}\hfill
    \begin{tikzpicture}[scale=1.75] 
        \draw[blueedge] ({330+120}:.75)--({330+240}:.75);
        \draw[greenedge] ({330+240}:.75)--({330}:.75);
        \draw[pinkedge] ({330+120}:.75)--({330}:.75);
        \foreach \x in {1,2,3}
            \draw[fill=white] ({330+120*\x}:.75) circle (2pt);
    \end{tikzpicture}\hfill\text{}
    \caption{The landmark graph of a resolving set must avoid forbidden $4$-cycles and forbidden $6$-cycles. The landmark graph of a resolving triple-looped landmark system must also avoid rainbow triangles. Solid edges are blue, dashed edges are pink, and dotted edges are green.}
    \label{fig:verboten}
\end{figure}
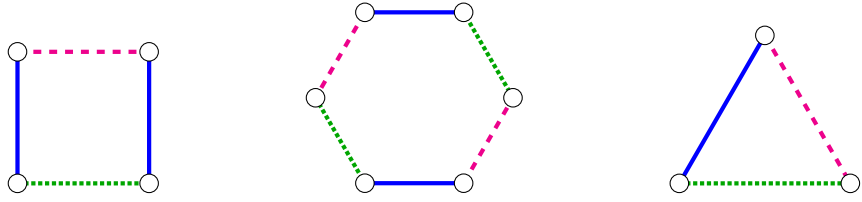

When either the $4$-cycle from \cref{ex:fourcycle} (possibly with colors permuted) or the $6$-cycle from \cref{ex:plainhex} appear in a landmark graph, the Hamming graph $HG(\mathbf{n};3)$ is unresolved.
We next show that when $W$ is a $2$-basic landmark system, these are in fact the only configurations we must avoid to guarantee $W$ is a resolving set.

\begin{theorem}\label{lem:basicverboten}
Let $\mathbf{n}\in\diag(\mathcal{N})$. If $W$ is a $2$-basic landmark system for the Hamming graph $HG(\mathbf{n};3)$, then $W$ is a resolving set if and only if the landmark graph $\mathcal{G}(W)$ avoids the following forbidden subgraphs:
\begin{enumerate}
    \item a $4$-cycle that contains all three colors of edges
    \item a $6$-cycle with opposite edges of the same color.
\end{enumerate}
\end{theorem}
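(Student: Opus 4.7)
The ``only if'' direction ($W$ resolving implies $\mathcal{G}(W)$ has no forbidden subgraphs) is handled by \cref{ex:fourcycle} and \cref{ex:plainhex}: each of the listed configurations produces an explicit unresolved pair of non-landmark vertices, so its presence in $\mathcal{G}(W)$ prevents $W$ from being resolving. For the ``if'' direction I would argue by contrapositive: suppose $\alpha=(a_1,a_2,a_3)$ and $\beta=(b_1,b_2,b_3)$ are distinct non-landmarks with the same set of landmarks at distance two, which by the remark preceding the theorem means their footprints cover the same vertex set in $\mathcal{G}(W)$. The goal is to exhibit a forbidden $4$-cycle or $6$-cycle.

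The key structural input from $W$ being $2$-basic is that each color class of $\mathcal{G}(W)$ is a perfect matching on $W$ (condition~(1)) and $\mathcal{G}(W)$ has no double edges (condition~(2)). Consequently, for each color $i$, the edges $W_{i,a_i}$ and $W_{i,b_i}$ are either identical or vertex-disjoint, and no two edges of different colors share both endpoints. Every non-landmark footprint is a rainbow $3$-edge subgraph whose shape is determined by its vertex count: $C_3$, $P_4$, $P_3\cup P_2$, or $3P_2$, as in \cref{fig:dinosaurtracks}. Since $\alpha$'s and $\beta$'s footprints share a vertex set, they share a shape, so I would run a case analysis on this common shape.

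The $C_3$ case is immediate: each color's matching contributes at most one edge on any three vertices, so $\beta$'s three rainbow edges must coincide with $\alpha$'s, forcing $\alpha=\beta$. The $P_3\cup P_2$ case is also vacuous but requires a short enumeration: the central vertex of the $P_3$ has no valid partner of the ``isolated'' color within the five-vertex set (any such partner would create a double edge), so covering it forces $\beta$'s two ``non-isolated'' color edges to behave very rigidly, and a finite check shows that no remaining selection of edges can recover the two endpoints of the $P_2$. In the $P_4$ case, the matching and no-double-edge constraints force the two ``end'' color edges of $\beta$ to agree with $\alpha$'s, while the ``middle'' color edge of $\beta$ is forced to join the two endpoints of $\alpha$'s $P_4$; together with $\alpha$'s edges this yields exactly the forbidden rainbow $4$-cycle of type~(1).

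The main obstacle is the $3P_2$ case. Here the matching and no-double-edge constraints force all three of $\beta$'s edges to differ from $\alpha$'s, so the union of the two footprints is a $2$-regular graph on the six shared vertices and hence a disjoint union of cycles. I would rule out a two-triangle decomposition as follows: any two edges of a single triangle share a vertex, yet $\alpha$'s three edges form a perfect matching of the six vertices, so at most one of $\alpha$'s edges can lie inside each triangle, accounting for at most two of the three. The union must therefore be a single $6$-cycle, on which $\alpha$'s and $\beta$'s footprints are the two perfect matchings of $C_6$. Because each color contributes exactly one edge to each matching, the two edges of any fixed color sit in opposite positions on the cycle, yielding the forbidden $6$-cycle of type~(2).
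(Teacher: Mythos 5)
Your proof is correct and follows essentially the same route as the paper's: necessity via the two explicit examples, and sufficiency by classifying the common footprint shape ($C_3$, $P_4$, $P_3\cup P_2$, $3P_2$) and using the perfect-matching and no-double-edge properties to force either $\alpha=\beta$ or a forbidden $4$- or $6$-cycle. The only soft spot is the $P_3\cup P_2$ case, where you defer to ``a finite check''; that check does close (the center of the $P_3$ can only be covered by its original green or pink edge, since its same-color partners inside the five-vertex set are blocked by the matching or by double edges, and the remaining choices never cover both endpoints of the $P_2$), and your two-triangle exclusion in the $3P_2$ case is a slightly cleaner way to arrive at the paper's forbidden $6$-cycle.
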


\begin{proof}
    Using \cref{ex:fourcycle} and \cref{ex:plainhex}, we can see that $W$ is not resolving when the landmark graph $\mathcal{G}(W)$ contains a $4$-cycle that contains all three colors of edges or a $6$-cycle with opposite edges of the same color.
    
    Conversely, suppose $W$ is a $2$-basic landmark system whose graph $\mathcal{G}(W)$ avoids forbidden $4$-cycles and $6$-cycles. We will show that $W$ is a resolving set. Suppose $\alpha=(a_1, a_2, a_3)$ and $\beta=(b_1, b_2, b_3)$ are distinct vertices in $HG(\mathbf{n};3)$ and that neither is a landmark. We will show the edges $W_{i,a_i}$ cannot cover the same set of vertices as the edges $W_{i,b_i}$.
    We consider cases based on the number of vertices covered. Throughout, we use the fact that $\mathcal{G}(W)$ is a simple graph with a proper $3$-edge coloring.
    
    Case 1 ($3$ vertices). If the edges $W_{i,a_i}$ and the edges $W_{i,b_i}$ cover the same set of three vertices, then we must have $W_{i,a_i}=W_{i,b_i}$ for $i=1,2,3$ since it is not possible to have two different edges of the same color in a subgraph with only three vertices. But this contradicts our assumption that $\alpha$ and $\beta$ are distinct.
    
    Case 2 ($4$ vertices). Suppose the edges $W_{i,a_i}$ and the edges $W_{i,b_i}$ cover the same set of four vertices. Since $\alpha$ is not a landmark, the edges $W_{i,a_i}$ cannot induce a $K_{1,3}$ and hence must induce a rainbow path $P_4$. Likewise, the edges $W_{i,b_i}$ induce a rainbow path $P_4$. The only way for two different rainbow paths to cover all four vertices is to make a $4$-cycle with one set of opposite edges the same color and the other two edges different colors, but we are assuming that $\mathcal{G}(W)$ avoids this forbidden configuration.
    
    Case 3 ($5$ vertices). Suppose the edges $W_{i,a_i}$ and the edges $W_{i,b_i}$ cover the same set of five vertices. Then each set of edges induces a rainbow $P_2\cup P_3$. Up to permuting colors, suppose the first $P_2\cup P_3$ has blue edge $W_{1,a_1}=\{x,y\}$, green edge $W_{2,a_2}=\{u,v\}$, and pink edge $W_{3,a_3}=\{u,w\}$, as in \cref{fig:p2p3case}[A]. If the second rainbow $P_2\cup P_3$ also has blue edge $\{x,y\}$, then since there are no double edges, the green and pink edges must also coincide, i.e., $W_{2,a_2}=W_{2,b_2}$ and $W_{3,a_3}=W_{3,b_3}$, contradicting $\alpha$ and $\beta$ being distinct. The only other option is for the second rainbow $P_2\cup P_3$ to have blue edge $W_{1,b_1}=\{v,w\}$ as in \cref{fig:p2p3case}[B].
    If the blue edge $vw$ is the $P_2$, then the edges $W_{2,b_2}$ and $W_{3,b_3}$ would have to create a green-pink path $xuy$ or $yux$, but this is not possible with a proper edge coloring. If the blue edge $vw$ is part of the $P_3$, then the $P_2$ would have to create a double edge between $x$ and $y$, which is not allowed.
    
    Case 4 ($6$ vertices). Suppose the edges $W_{i,a_i}$ and the edges $W_{i,b_i}$ cover the same set of six vertices. Then each set of edges creates a rainbow matching $3P_2$. We will show the only way this can happen is if the edges of the matchings together form a forbidden $6$-cycle. Suppose the first rainbow matching has blue edge $W_{1,a_1}=\{u,v\}$, green edge $W_{2,a_2}=\{w,x\}$, and pink edge $W_{3,a_3}=\{y,z\}$, as in \cref{fig:p2p3case}[C and D]. The two matchings must differ by at least one edge, so up to permuting colors, suppose they have different pink edges. Then $W_{3,b_3}$ must connect a vertex from the blue edge $W_{1,a_1}$ to a vertex from the green edge $W_{2,a_2}$. Without loss of generality, say $W_{3,b_3}=\{v,w\}$. Now, the blue edge $W_{1,b_1}$ and green edge $W_{2,b_2}$ must cover the vertices $u$, $x$, $y$, and $z$. There are two ways to do this, leading to either the $6$-cycle $uvwxyzu$ or $uvwxzyu$. Either way, the pattern of edge colors around the cycle is blue, pink, green, blue, pink, green. But this is a contradiction since we assumed that the graph of $W$ has no forbidden $6$-cycles.
    
    Thus, in all cases, the edges $W_{i,a_i}$ and the edges $W_{i,b_i}$ cannot cover the same set of vertices and so $\alpha$ and $\beta$ are resolved. Hence $W$ is a resolving set.
\end{proof}

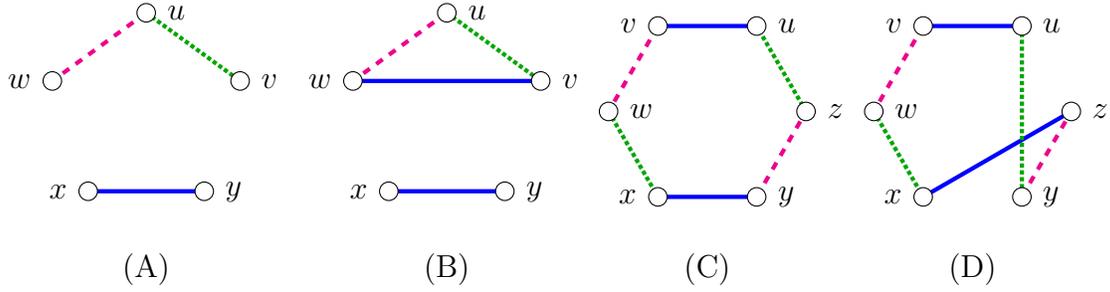
\begin{figure}
    \centering
    \begin{tikzpicture}[scale=1.75] 
        \draw[pinkedge] ({18+72}:.75)--({18+144}:.75);
        \draw[greenedge] ({18+72}:.75)--({18+0}:.75);
        \draw[blueedge] ({18+3*72}:.75)--({18+4*72}:.75);
        \foreach \x in {1,2,3,4,5}
           \draw[fill=white] ({18+72*\x}:.75) circle (2pt);
        \draw ({18+72}:.75) node[right] {~$u$};
        \draw ({18+144}:.75) node[left] {$w$\,\,};
        \draw ({18+0}:.75) node[right] {~$v$};
        \draw ({18+3*72}:.75) node[left] {$x$\,\,};
        \draw ({18+4*72}:.75) node[right] {~$y$};
        \draw ({270}:1.2) node{(A)};
    \end{tikzpicture}
    \begin{tikzpicture}[scale=1.75] 
        \draw[pinkedge] ({18+72}:.75)--({18+144}:.75);
        \draw[greenedge] ({18+72}:.75)--({18+0}:.75);
        \draw[blueedge] ({18+3*72}:.75)--({18+4*72}:.75);
        \draw[blueedge] ({18+144}:.75)--({18+0}:.75);
        \foreach \x in {1,2,3,4,5}
            \draw[fill=white] ({18+72*\x}:.75) circle (2pt);
        \draw ({18+72}:.75) node[right] {~$u$};
        \draw ({18+144}:.75) node[left] {$w$\,\,};
        \draw ({18+0}:.75) node[right] {~$v$};
        \draw ({18+3*72}:.75) node[left] {$x$\,\,};
        \draw ({18+4*72}:.75) node[right] {~$y$};
        \draw ({270}:1.2) node{(B)};
    \end{tikzpicture}
    \begin{tikzpicture}[scale=1.75] 
        \draw[blueedge] (60:.75)--(120:.75);
        \draw[blueedge] (240:.75)--(300:.75);
        \draw[greenedge] (0:.75)--(60:.75);
        \draw[greenedge] (180:.75)--(240:.75);
        \draw[pinkedge] (300:.75)--(360:.75);
        \draw[pinkedge] (120:.75)--(180:.75);
        \foreach \x in {1,...,6}
            \draw[fill=white] ({60*\x}:.75) circle (2pt);
        \draw ({60}:.75) node[right] {~$u$};
        \draw ({120}:.75) node[left] {$v$\,\,};
        \draw ({180}:.75) node[right] {~$w$};
        \draw ({240}:.75) node[left] {$x$\,\,};
        \draw ({300}:.75) node[right] {~$y$};
        \draw ({360}:.75) node[right] {~$z$};
        \draw ({270}:1.2) node{(C)};
    \end{tikzpicture}
    \begin{tikzpicture}[scale=1.75] 
        \draw[blueedge] (60:.75)--(120:.75);
        \draw[greenedge] (180:.75)--(240:.75);
        \draw[pinkedge] (300:.75)--(360:.75);
        \draw[blueedge] (240:.75)--(360:.75);
        \draw[greenedge] (60:.75)--(300:.75);
        \draw[pinkedge] (120:.75)--(180:.75);
        \foreach \x in {1,...,6}
            \draw[fill=white] ({60*\x}:.75) circle (2pt);
        \draw ({60}:.75) node[right] {~$u$};
        \draw ({120}:.75) node[left] {$v$\,\,};
        \draw ({180}:.75) node[right] {~$w$};
        \draw ({240}:.75) node[left] {$x$\,\,};
        \draw ({300}:.75) node[right] {~$y$};
        \draw ({360}:.75) node[right] {~$z$};
        \draw ({270}:1.2) node{(D)};
    \end{tikzpicture}\hfill\text{}
    \caption{Illustrations for Cases $3$ and $4$ in the proof of \cref{lem:basicverboten}. Line styles correspond to the edge colors blue (solid), green (dotted), and pink (dashed). (A) and (B) It is not possible for two different rainbow $P_2\cup P_3$ graphs to cover the same set of five vertices. (C) and (D) If two different rainbow matchings cover the same set of six vertices, the edges together form a forbidden $6$-cycle.}
    \label{fig:p2p3case}
\end{figure}

\subsection{Forbidden configurations for triple-looped landmark systems}
We next investigate conditions for when a triple-looped landmark system will resolve a Hamming graph.
We will use these conditions in \cref{sec:diagonal} to construct resolving sets for most of the Hamming graphs $HG(n,n,n;3)$ in $2n-1$ landmarks.

Recall that the graph of a triple-looped landmark system is obtained from the graph of a $2$-basic landmark system by adding a vertex with a triple loop.
The next example shows that if there is a rainbow triangle in the graph of a $2$-basic landmark system, then the extension to a triple-looped landmark system will not be a resolving set.
  
\begin{example}[Triangle union triple loop]\label{ex:tripleloop}
  Let $\mathbf{n}\in\mathcal{N}$ and let $W$ be a subset of vertices of the graph $HG(\mathbf{n};3)$.
  Suppose the graph of $W$ contains a rainbow triangle with blue edge $W_{1,a_1}=\{w_1,w_2\}$, green edge $W_{2,a_2}=\{w_2,w_3\}$, and pink edge $W_{3,a_3}=\{w_3,w_1\}$, as in \cref{fig:verboten}. Letting $u=(n_1+1,n_2+1,n_3+1)$, the extension of $W$ to a triple-looped landmark system $W\cup\{u\}$ does not resolve $HG(\mathbf{n}+\mathbf{1};3)$; for instance, the vertices $(a_1,a_2,n_3+1)$ and $(a_1,n_2+1,a_3)$ are unresolved since neither is a landmark and
  \[W_{1,a_1}\cup W_{2,a_2}\cup W_{3,n_3+1}=\{w_1,w_2,w_3,u\}=W_{1,a_1}\cup W_{2,n_2+1}\cup W_{3,a_3}.\]
\end{example}

The next theorem is analogous to \cref{lem:basicverboten}, but allows for a triple loop at a single vertex.  Furthermore this theorem allows us to construct minimum resolving sets in the next section.

\begin{theorem}\label{lem:verboten}
Let $\mathbf{n}\in\diag(\mathcal{N})$ and suppose $W$ is a $2$-basic landmark system for $HG(\mathbf{n};3)$. Let $u=(n+1,n+1,n+1)$. Then the triple-looped landmark system $W\cup\{u\}$ resolves $HG(\mathbf{n}+\mathbf{1};3)$ if and only if the landmark graph $\mathcal{G}(W)$ avoids the following forbidden subgraphs:
\begin{enumerate}
    \item a $4$-cycle that contains all three colors of edges,
    \item a $6$-cycle with opposite edges of the same color, and
    \item a rainbow triangle.
\end{enumerate}
\end{theorem}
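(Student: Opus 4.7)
The plan is to prove both implications by analyzing how footprints of non-landmark vertices interact with $\mathcal{G}(W\cup\{u\})$. The forward direction is largely already done: each forbidden subgraph produces an explicit unresolved pair. The forbidden $4$-cycle and $6$-cycle configurations from \cref{ex:fourcycle,ex:plainhex} use only coordinates in $[n]$, so the unresolved pairs they produce remain unresolved in $HG(\mathbf{n}+\mathbf{1};3)$; the rainbow-triangle obstruction is handled by \cref{ex:tripleloop}.

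For the reverse direction, assume $\mathcal{G}(W)$ avoids all three forbidden subgraphs, and let $\alpha,\beta$ be distinct non-landmark vertices. I will show that their footprints cover different vertex sets. The natural case split is on $k_\alpha:=|\{i:a_i=n+1\}|\in\{0,1,2\}$ (and likewise $k_\beta$), since $\alpha\neq u$ rules out $k_\alpha=3$. When $k_\alpha=k_\beta=0$, both footprints sit inside $\mathcal{G}(W)$ and \cref{lem:basicverboten} applies using only conditions (1) and (2). When $k_\alpha\neq k_\beta$, either $u$ lies in only one footprint, or a simple size check (the non-$u$ portions of the footprints have sizes drawn from $\{2,3,4\}$ depending on $k$ and on overlaps among the relevant blocks) separates them. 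When $k_\alpha=k_\beta=2$, each non-$u$ portion is a single block; matching two such blocks forces two landmarks to share two coordinates, contradicting $|W_{i,a}\cap W_{j,b}|\le 1$.

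The substantive case is $k_\alpha=k_\beta=1$. If the looped colors agree (WLOG both color $1$), both footprints have the form $\{u\}\cup W_{2,\cdot}\cup W_{3,\cdot}$. In the potential four-vertex collision subcase, a pigeonhole argument on the four possible $(2,3)$-coordinate patterns in $\{a_2,b_2\}\times\{a_3,b_3\}$ together with $|W_{2,a_2}\cap W_{3,a_3}|=0$ forces more distinct landmarks than condition (2) permits; in the three-vertex subcase, matching the unique $P_3$-centers across footprints leads directly to a violation of condition~(2). So no new hypothesis is needed in this same-color subcase.

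The main obstacle is $k_\alpha=k_\beta=1$ with the loops of \emph{different} colors (WLOG color $1$ for $\alpha$, color $2$ for $\beta$), so $V_\alpha=\{u\}\cup W_{2,a_2}\cup W_{3,a_3}$ and $V_\beta=\{u\}\cup W_{1,b_1}\cup W_{3,b_3}$. The four-vertex subcase is eliminated by the same pigeonhole (now only three $(\text{coord},\text{coord})$ combinations survive, giving at most three landmarks rather than four). In the three-vertex subcase, let $v_\alpha\in W_{2,a_2}\cap W_{3,a_3}$ and $v_\beta\in W_{1,b_1}\cap W_{3,b_3}$ be the path centers. Tracing the possible matchings $\{v_\alpha,p,q\}=\{v_\beta,r,s\}$ and repeatedly applying condition~(2), the only arrangement not immediately contradictory pairs $v_\alpha$ with a leaf of $\beta$'s footprint and $v_\beta$ with a leaf of $\alpha$'s footprint, forces $a_3=b_3$, and makes the three landmarks $v_\alpha$, $v_\beta$, and the matched leaf $p=r$ pairwise share a single coordinate in each of the three colors — i.e., they form a rainbow triangle in $\mathcal{G}(W)$, contradicting hypothesis (3). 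This is the single place where the rainbow-triangle condition is essential.
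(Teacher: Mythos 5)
Your proposal is correct and takes essentially the same route as the paper: reduce to the case where both vertices have a coordinate equal to $n+1$, invoke \cref{lem:basicverboten} for the remaining old vertices, and show that the only footprint collision not already excluded by the $2$-basic conditions and the proper edge-coloring is two rainbow $P_3\cup L_1$ footprints whose union is a rainbow triangle with a double loop. The paper organizes the final case analysis by the number of vertices the two footprints cover (three, four, or five) rather than by how many coordinates equal $n+1$, but the resulting cases and arguments coincide, including the single point where hypothesis (3) is needed.
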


\begin{proof}
    Let $V_n$ and $V_{n+1}$ be the vertex sets of the Hamming graphs $HG(\mathbf{n};3)$ and $HG(\mathbf{n}+\mathbf{1};3)$, respectively.
    If a forbidden $4$-cycle or $6$-cycle exists in the graph of $W$, then $W$ does not resolve $HG(\mathbf{n};3)$, and so $W\cup\{u\}$ does not resolve $HG(\mathbf{n}+\mathbf{1};3)$. This is because the new landmark $u$ is adjacent to all of the vertices in $V_n$, so vertices in $V_n$ can only be resolved using a landmark from $W$.
    If a rainbow triangle exists in the graph of $W$, then a rainbow triangle and triple loop exist in the graph of $W\cup\{u\}$, so by \cref{ex:tripleloop}, $W\cup\{u\}$ does not resolve $HG(\mathbf{n}+\mathbf{1};3)$.
    
    Conversely, suppose $W$ is a $2$-basic landmark system such that $\mathcal{G}(W)$ avoids forbidden $4$-cycles, forbidden $6$-cycles, and rainbow triangles. We will show that $W \cup \{u\}$ is a resolving set for $HG(\mathbf{n}+\mathbf{1};3)$.  
    
    Suppose $\alpha=(a_1, a_2, a_3)$ and $\beta=(b_1, b_2, b_3)$ are distinct vertices in $HG(\mathbf{n}+\mathbf{1};3)$ and that neither is a landmark. 
    Note that, by \cref{lem:basicverboten}, $W$ is a resolving set for $HG(\mathbf{n};3)$, so if $\alpha$ and $\beta$ are both in $V_n$, then they are resolved by some landmark in $W$.  If one vertex, say $\alpha$, is in $V_n$ and the other vertex, $\beta$, is in $V_{n+1}-V_n$, then $u$ resolves $\alpha$ and $\beta$, as $u$ sees $\beta$ but not $\alpha$.  
    The remaining case, where $\alpha$ and $\beta$ are both in $V_{n+1}-V_n$, requires the most work. We will show the edges $W_{i,a_i}$ cannot cover the same set of vertices as the edges $W_{i,b_i}$. We consider cases based on the number of vertices covered. 

    Note that the edges $W_{i,a_i}$ (likewise, $W_{i,b_i}$) must consist of either a loop and two plain edges, or a double loop and a plain edge. Thus the number of vertices covered ranges between three and five.
    
    Case 1 ($3$ vertices). If the edges $W_{i,a_i}$ and the edges $W_{i,b_i}$ cover the same set of three vertices, then each set of edges induces a rainbow $P_2 \cup L_2$, where the $L_2$ must be at the same vertex. And then we must have $W_{i,a_i}=W_{i,b_i}$ for $i=1,2,3$ since there are no double edges. But this contradicts our assumption that $\alpha$ and $\beta$ are distinct.
  
    Case 2 ($4$ vertices). Suppose the edges $W_{i,a_i}$ and the edges $W_{i,b_i}$ cover the same set of four vertices.  Both edge sets induce a rainbow  $P_3 \cup L_1$.  Since the loops occur at the same vertex, the only way for two different rainbow $P_3 \cup L_1$ subgraphs to cover all four vertices is if they together make a $C_3\cup L_2$, but we are assuming that $\mathcal{G}(W)$ avoids rainbow triangles.
    
    Case 3 ($5$ vertices). Suppose the edges $W_{i,a_i}$ and the edges $W_{i,b_i}$ cover the same set of five vertices.
    Both edge sets induce a rainbow  $P_2 \cup P_2 \cup L_1$. 
   Both sets must have their $L_1$ at the same vertex (although possibly different colors).   
    There is no way to make two different rainbow $2P_2$'s cover the same set of $4$ vertices since once the first rainbow $2P_2$ is placed, the only other possible $2P_2$ would have edges of the same color instead of different colors. Hence we must have $W_{i,a_i}=W_{i,b_i}$ for $i=1,2,3$, which contradicts $\alpha$ and $\beta$ being distinct.
    
    Thus in all cases, the edges $W_{i,a_i}$ and the edges $W_{i,b_i}$ cannot cover the same set of vertices and so $\alpha$ and $\beta$ are resolved.  Hence $W \cup \{u\}$ is a resolving set.
\end{proof}

\section{Metric Bases for the Diagonal Family}\label{sec:diagonal}

Now that we know which configurations must be avoided in the landmark graph, we are able to construct minimum resolving sets for the diagonal family of Hamming graphs $HG(n,n,n;3)$. The graphs with $n\geq 5$ achieve the metric dimension lower bound of $2n-1$, while the graphs for $n=3$ and $n=4$ require $2n$ landmarks.

\subsection{Construction of metric bases}
For the main case of $n\geq 5$, our strategy will be to construct an order $2(n-1)$ graph $G$ that avoids the configurations from \cref{lem:verboten} and define a $2$-basic landmark system $W$ with $\mathcal{G}(W)=G$. We then extend $W$ to a triple-looped landmark system that resolves $HG(n, n, n; 3)$ in $2n-1$ landmarks.


\begin{lemma}\label{lem:cubic}
    For $k\geq 6$, there exists a cubic graph on $2k$ vertices with a proper $3$-edge-coloring that avoids the forbidden edge-colored subgraphs from \cref{lem:verboten}: \begin{enumerate}
    \item a $4$-cycle that contains all three colors of edges,
    \item a $6$-cycle with opposite edges of the same color, and
    \item a rainbow triangle.
\end{enumerate}
\end{lemma}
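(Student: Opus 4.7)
My plan is to construct, for each $k\ge 6$, an explicit cubic graph $G_k$ on $2k$ vertices with a proper $3$-edge-coloring meeting the conditions, splitting into cases based on the parity of $k$.

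For even $k$, I would take $G_k$ to be the prism $C_k\,\Box\,K_2$ with vertices $u_0,\ldots,u_{k-1},v_0,\ldots,v_{k-1}$: color every rung $\{u_i,v_i\}$ pink, and color the cycle edges $\{u_i,u_{i+1}\}$ and $\{v_i,v_{i+1}\}$ blue if $i$ is even and green if $i$ is odd. Because $k$ is even this is a proper $3$-edge-coloring, and verification is then routine: the prism is triangle-free, so no rainbow triangle; each $4$-cycle $u_iu_{i+1}v_{i+1}v_i$ has two pink opposite rungs and two cycle edges of the same color (determined by the parity of $i$), hence only two colors appear; and each $6$-cycle is either a copy of $C_6$ (only for $k=6$) with pattern $(B,G,B,G,B,G)$ or a $2$-rung cycle $u_iu_{i+1}u_{i+2}v_{i+2}v_{i+1}v_i$ with pattern $(X,Y,P,Y,X,P)$ where $\{X,Y\}=\{B,G\}$; in both cases at most one pair of opposite edges matches.

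For odd $k\ge 7$ the prism on $2k$ vertices itself is ruled out: the $4$-cycle constraint would force all rungs to a single color and each $C_k$ to be properly $2$-edge-colored, impossible for odd $k$. I would instead start with the prism $C_{k-1}\,\Box\,K_2$ on $2(k-1)$ vertices (handled by the even case, since $k-1\ge 6$ is even), delete the top-cycle edges $\{u_0,u_1\}$ (blue) and $\{u_3,u_4\}$ (green), and introduce two new vertices $x,y$ together with the edges $\{u_0,x\},\{u_3,x\},\{u_1,y\},\{u_4,y\},\{x,y\}$. I color the new edges $\{u_0,x\},\{u_1,y\}$ blue, $\{u_3,x\},\{u_4,y\}$ green, and $\{x,y\}$ pink; this is proper at every vertex. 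Verification of the forbidden configurations reduces to a finite case analysis: no triangle passes through $x$ or $y$ (the three neighbors of each are pairwise non-adjacent); the surviving prism $4$-cycles are two-colored as in the even case, and no $4$-cycle passes through $x$ or $y$ (no two of its neighbors share a common neighbor other than itself); and every $6$-cycle through the new edge $\{x,y\}$ is either $xu_0v_0v_1u_1yx$ with pattern $(B,P,B,P,B,P)$ or $xu_3v_3v_4u_4yx$ with pattern $(G,P,G,P,G,P)$, neither of which has all three pairs of opposite edges matching, while the prism $6$-cycles that survive retain the benign patterns from the even case.

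The main obstacle will be the odd-$k$ case, both in showing that any suitable construction exists (the straightforward prism being excluded) and in co-designing the graph and its coloring so that no forbidden cycle arises at the attachment. The key design choices are to delete one blue and one green top-cycle edge at positions $0$ and $3$ (of the right parities and sufficiently far apart) and to attach $x$ to the endpoints $u_0,u_3$ and $y$ to the endpoints $u_1,u_4$, so that each new edge inherits exactly the color of the deletion it replaces and no length-$4$ path appears between the attachment points that could create an unwanted $4$- or $6$-cycle.
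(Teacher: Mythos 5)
Your construction is correct, but it is genuinely different from the paper's. The paper builds one family for all $k\ge 6$: a $2k$-cycle with alternating blue/pink edges plus a green perfect matching, where a careful labeling ($p_i,p_i'$ and $q_i,q_i'$) guarantees that matched vertices are at least five steps apart on the cycle, which kills triangles and forbidden $4$-cycles almost for free and leaves only a short argument for $6$-cycles; the two parities differ only in how four labels are permuted. You instead take the prism $C_k\,\Box\,K_2$ with monochromatic rungs and the two $k$-cycles alternating in the other two colors when $k$ is even --- here the verification is essentially a complete and very clean enumeration of all $4$- and $6$-cycles of a prism (square faces are $2$-colored, $2$-rung hexagons have pattern $XYPYXP$, and the $C_6$ faces at $k=6$ have pattern $BGBGBG$, none forbidden) --- and for odd $k$ you perform a local surgery on $C_{k-1}\,\Box\,K_2$, which I checked does work: the color bookkeeping at $u_0,u_1,u_3,u_4$ is consistent, no new $4$-cycles or triangles appear, and the only new $6$-cycles are the two $BPBPBP$/$GPGPGP$ hexagons you list. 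The trade-off is that the paper's single global structure handles both parities uniformly, while your even case is more symmetric and easier to check but your odd case needs an ad hoc gadget and a finite neighborhood analysis. One small omission in your odd-case write-up: besides $6$-cycles containing the edge $\{x,y\}$, you must also rule out $6$-cycles passing through $x$ via both edges $\{u_0,x\}$ and $\{u_3,x\}$ (and likewise through $y$); these would require a path of length four from $u_0$ to $u_3$ (resp.\ $u_1$ to $u_4$) avoiding the new vertices, and no such path exists in the modified prism for $k-1\ge 6$, so the claim survives, but the case should be stated.
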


\begin{proof}
    To construct a graph with the desired properties (see \cref{fig:cubic}), we let $k\geq 6$ and start with an order $2k$ Hamiltonian cycle whose edge colors alternate, say blue and pink. Next, we label the vertices of the cycle in a way that helps us define green edges and avoid the forbidden configurations. Let $m=k-4$.
    If $k$ is even, label the vertices of the cycle in the order
    \[(p_1,p_2,p_3,p_4,q_1,\ldots,q_m,p_1',p_3',p_2',p_4',q_m',\ldots,q_1'),\]
    and if $k$ is odd, label the vertices in the order
    \[(p_1,p_2,p_3,p_4,q_1,\ldots,q_m,p_2',p_4',p_1',p_3',q_m',\ldots,q_1').\]
    To finish the graph, add green edges $p_ip_i'$ (for $i=1,2,3,4$) and $q_iq_i'$ (for $i=1,\ldots,m$). We claim these graphs avoid the forbidden triangles, $4$-cycles, and $6$-cycles of \cref{lem:verboten}.
    
    If there was a rainbow triangle, the outer Hamiltonian cycle would contain a sequence of vertices $u,v,u'$, which we don't have.
    
    If there was a forbidden $4$-cycle with two blue edges or two pink edges, then we would have a path $uvwu'$ along the outer Hamiltonian cycle, but this does not occur ($u$ and $u'$ are always at least five edges apart). If there was a forbidden $4$-cycle with two green edges, then there would exist edges $uv$ and $u'v'$ of different colors, but this does not occur in our construction.
    
    Lastly, if there was a forbidden $6$-cycle, then it would include a path $uvw$ on the outer Hamiltonian cycle such that $u'$ and $w'$ are distance two apart on the Hamiltonian cycle. This would imply $u$ and $w$ are both $q$'s, but then we see that the coloring of the $6$-cycle is not a forbidden coloring.
\end{proof}
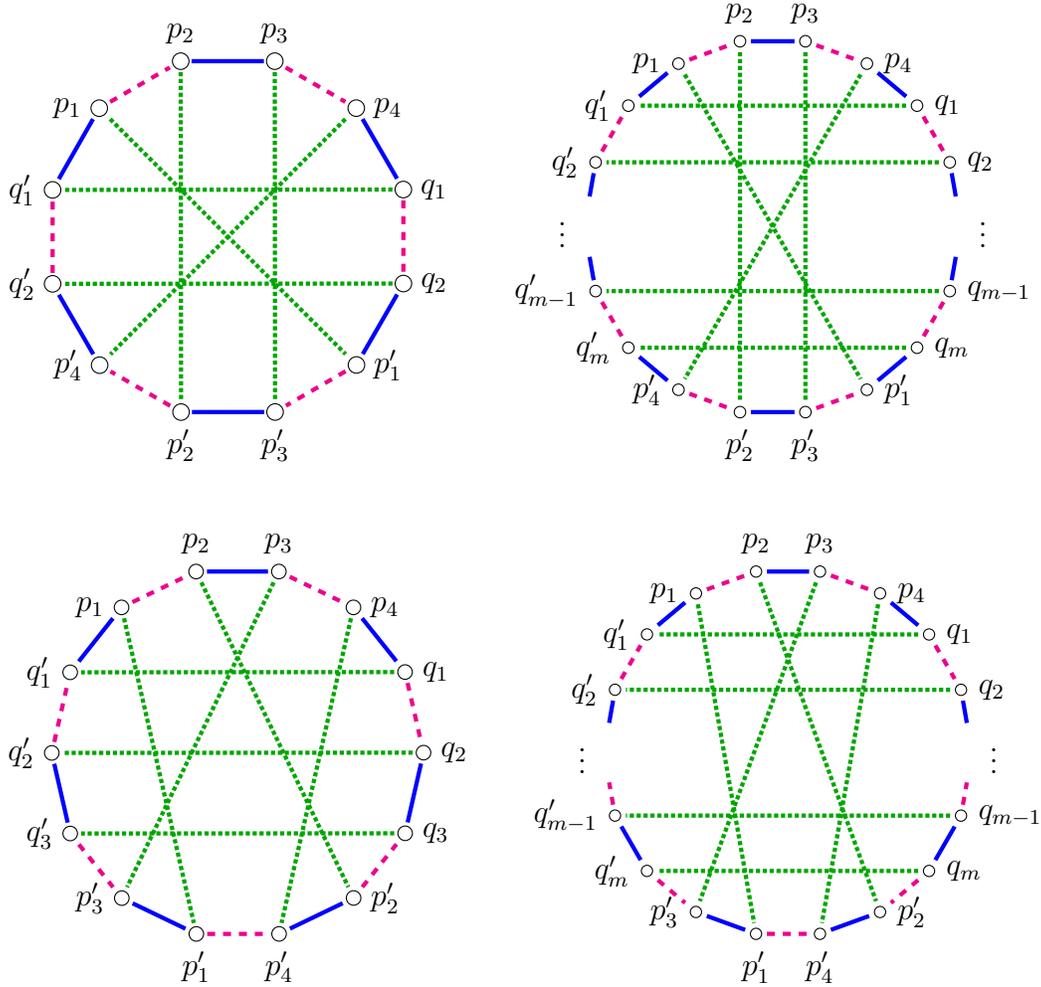
\begin{figure}
    \centering
\text{}\hfill
\begin{tikzpicture}[scale=1.25] 
              \draw (0,0) node(p_1) {};
              \draw (p_1)+(30:1) node(p_2) {};
              \draw (p_2)+(1,0) node(p_3) {};
              \draw (p_3)+(-30:1) node(p_4) {};
              \draw (p_4)+(-60:1) node(q_1) {};
              \draw (q_1)+(0,-1) node(q_2) {};
              \draw (q_2)+(-120:1) node(p_1') {};
              \draw (p_1')+(-150:1) node(p_3') {};
              \draw (p_3')+(-1,0) node(p_2') {};
              \draw (p_2')+(150:1) node(p_4') {};
              \draw (p_4')+(120:1) node(q_2') {};
              \draw (q_2')+(90:1) node(q_1') {};
              \draw[pinkedge] (p_1)--(p_2);
              \draw[pinkedge] (p_3)--(p_4);
              \draw[pinkedge] (q_1)--(q_2);
              \draw[pinkedge] (p_1')--(p_3');
              \draw[pinkedge] (p_2')--(p_4');
              \draw[pinkedge] (q_2')--(q_1');
              \draw[blueedge] (p_2)--(p_3);
              \draw[blueedge] (p_4)--(q_1);
              \draw[blueedge] (q_2)--(p_1');
              \draw[blueedge] (p_3')--(p_2');
              \draw[blueedge] (p_4')--(q_2');
              \draw[blueedge] (q_1')--(p_1);
              \draw[greenedge] (p_1)--(p_1');
              \draw[greenedge] (p_2)--(p_2');
              \draw[greenedge] (p_3)--(p_3');
              \draw[greenedge] (p_4)--(p_4');
              \draw[greenedge] (q_1)--(q_1');
              \draw[greenedge] (q_2)--(q_2');
              \foreach \pt/\labpos in {p_1/left,p_2/above,p_3/above,p_4/right,q_1/right,q_2/right,p_1'/right,p_3'/below,p_2'/below,p_4'/left,q_2'/left,q_1'/left}
                 \draw[fill=white] (\pt) circle (2.5pt) node[\labpos=2.5pt] {$\pt$};
    \end{tikzpicture}\hfill
       \begin{tikzpicture}[scale=0.87] 
              \draw (0,0) node(p_1) {};
              \draw (p_1)+(360/18:1) node(p_2) {};
              \draw (p_2)+(1,0) node(p_3) {};
              \draw (p_3)+(-360/18:1) node(p_4) {};
              \draw (p_4)+(-2*360/18:1) node(q_1) {};
              \draw (q_1)+(-3*360/18:1) node(q_2) {};
              \draw (q_2)+(-4*360/18:1) node(q_3) {};
              \draw (q_3)+(-5*360/18:1) node(q_4) {};
              \draw (q_4)+(-6*360/18:1) node(q_5) {};
              \draw (q_5)+(-7*360/18:1) node(p_1') {};
              \draw (p_1')+(-8*360/18:1) node(p_3') {};
              \draw (p_3')+(-9*360/18:1) node(p_2') {};
              \draw (p_2')+(-10*360/18:1) node(p_4') {};
              \draw (p_4')+(-11*360/18:1) node(q_5') {};
              \draw (q_5')+(-12*360/18:1) node(q_4') {};
              \draw (q_4')+(-13*360/18:1) node(q_3') {};
              \draw (q_3')+(-14*360/18:1) node(q_2') {};
              \draw (q_2')+(-15*360/18:1) node(q_1') {};
              \draw (q_2)+(-4*360/18:.7) node(mq23) {};
              \draw (q_4)+(4*360/18:.7) node(mq43) {};
              \draw (q_4')+(-13*360/18:.7) node(mq43') {};
              \draw (q_2')+(13*360/18:.7) node(mq23') {};
              \draw[pinkedge] (p_1)--(p_2);
              \draw[pinkedge] (p_3)--(p_4);
              \draw[pinkedge] (q_1)--(q_2);
              \draw[blueedge] (p_2')--(p_3');
               \draw[blueedge] (q_4')--(mq43');
              \draw[pinkedge] (q_2')--(q_1');
               \draw[blueedge] (q_4)--(mq43);
              \draw[blueedge] (q_5')--(p_4');
              \draw[blueedge] (q_5)--(p_1');
              \draw[blueedge] (p_2)--(p_3);
              \draw[blueedge] (p_4)--(q_1);
              \draw[blueedge] (q_2)--(mq23);
              \draw[pinkedge] (p_3')--(p_1');
              \draw[blueedge] (mq23')--(q_2');
              \draw[blueedge] (q_1')--(p_1);
              \draw[pinkedge] (p_4')--(p_2');
              \draw[pinkedge] (q_4')--(q_5');
              \draw[pinkedge] (q_4)--(q_5);
              \draw[greenedge] (p_1)--(p_1');
              \draw[greenedge] (p_2)--(p_2');
              \draw[greenedge] (p_3)--(p_3');
              \draw[greenedge] (p_4)--(p_4');
              \draw[greenedge] (q_1)--(q_1');
              \draw[greenedge] (q_2)--(q_2');
              \draw[greenedge] (q_4)--(q_4');
              \draw[greenedge] (q_5)--(q_5');
              \foreach \pt/\labpos in {p_1/left,p_2/above,p_3/above,p_4/right,q_1/right,q_2/right,p_1'/right,p_3'/below,p_2'/below,p_4'/left,q_2'/left,q_1'/left}
                 \draw[fill=white] (\pt) circle (2.5pt) node[\labpos=2.5pt] {$\pt$};
              \draw[fill=white] (q_5) circle (2.5pt) node[right=2.5pt] {$q_m$};
              \draw[fill=white] (q_4) circle (2.5pt) node[right=2.5pt] {$q_{m-1}$};
              \draw (q_3) node[right=2.5pt] {$\vdots$};
              \draw[fill=white] (q_5') circle (2.5pt) node[left=2.5pt] {$q_m'$};
              \draw[fill=white] (q_4') circle (2.5pt) node[left=2.5pt] {$q_{m-1}'$};
              \draw (q_3') node[left=2.5pt] {$\vdots$};
    \end{tikzpicture} \hfill\text{}
\vskip19pt
\hfill
    \begin{tikzpicture}[scale=1.1] 
              \draw (0,0) node(p_1) {};
              \draw (p_1)+(360/14:1) node(p_2) {};
              \draw (p_2)+(1,0) node(p_3) {};
              \draw (p_3)+(-360/14:1) node(p_4) {};
              \draw (p_4)+(-2*360/14:1) node(q_1) {};
              \draw (q_1)+(-3*360/14:1) node(q_2) {};
              \draw (q_2)+(-4*360/14:1) node(q_3) {};
              \draw (q_3)+(-5*360/14:1) node(p_2') {};
              \draw (p_2')+(-6*360/14:1) node(p_4') {};
              \draw (p_4')+(-7*360/14:1) node(p_1') {};
              \draw (p_1')+(-8*360/14:1) node(p_3') {};
              \draw (p_3')+(-9*360/14:1) node(q_3') {};
              \draw (q_3')+(-10*360/14:1) node(q_2') {};
              \draw (q_2')+(-11*360/14:1) node(q_1') {};
              \draw[pinkedge] (p_1)--(p_2);
              \draw[pinkedge] (p_3)--(p_4);
              \draw[pinkedge] (q_1)--(q_2);
              \draw[pinkedge] (p_1')--(p_4');
              \draw[pinkedge] (q_3')--(p_3');
              \draw[pinkedge] (q_2')--(q_1');
              \draw[pinkedge] (p_2')--(q_3);
              \draw[blueedge] (p_2)--(p_3);
              \draw[blueedge] (p_4)--(q_1);
              \draw[blueedge] (q_2)--(q_3);
              \draw[blueedge] (p_3')--(p_1');
              \draw[blueedge] (q_3')--(q_2');
              \draw[blueedge] (q_1')--(p_1);
              \draw[blueedge] (p_4')--(p_2');
              \draw[greenedge] (p_1)--(p_1');
              \draw[greenedge] (p_2)--(p_2');
              \draw[greenedge] (p_3)--(p_3');
              \draw[greenedge] (p_4)--(p_4');
              \draw[greenedge] (q_1)--(q_1');
              \draw[greenedge] (q_2)--(q_2');
              \draw[greenedge] (q_3)--(q_3');
              \foreach \pt/\labpos in {p_1/left,p_2/above,p_3/above,p_4/right,q_1/right,q_2/right,p_1'/below,p_3'/left,p_2'/right,p_4'/below,q_2'/left,q_1'/left,q_3/right,q_3'/left}
                 \draw[fill=white] (\pt) circle (2.5pt) node[\labpos=2.5pt] {$\pt$};
    \end{tikzpicture}\hfill
    \begin{tikzpicture}[scale=.85] 
              \draw (0,0) node(p_1) {};
              \draw (p_1)+(360/18:1) node(p_2) {};
              \draw (p_2)+(1,0) node(p_3) {};
              \draw (p_3)+(-360/18:1) node(p_4) {};
              \draw (p_4)+(-2*360/18:1) node(q_1) {};
              \draw (q_1)+(-3*360/18:1) node(q_2) {};
              \draw (q_2)+(-4*360/18:1) node(q_3) {};
              \draw (q_3)+(-5*360/18:1) node(q_4) {};
              \draw (q_4)+(-6*360/18:1) node(q_5) {};
              \draw (q_5)+(-7*360/18:1) node(p_2') {};
              \draw (p_2')+(-8*360/18:1) node(p_4') {};
              \draw (p_4')+(-9*360/18:1) node(p_1') {};
              \draw (p_1')+(-10*360/18:1) node(p_3') {};
              \draw (p_3')+(-11*360/18:1) node(q_5') {};
              \draw (q_5')+(-12*360/18:1) node(q_4') {};
              \draw (q_4')+(-13*360/18:1) node(q_3') {};
              \draw (q_3')+(-14*360/18:1) node(q_2') {};
              \draw (q_2')+(-15*360/18:1) node(q_1') {};
              \draw (q_2)+(-4*360/18:.7) node(mq23) {};
              \draw (q_4)+(4*360/18:.7) node(mq43) {};
              \draw (q_4')+(-13*360/18:.7) node(mq43') {};
              \draw (q_2')+(13*360/18:.7) node(mq23') {};
              \draw[pinkedge] (p_1)--(p_2);
              \draw[pinkedge] (p_3)--(p_4);
              \draw[pinkedge] (q_1)--(q_2);
              \draw[pinkedge] (p_1')--(p_4');
               \draw[pinkedge] (q_4')--(mq43');
              \draw[pinkedge] (q_2')--(q_1');
               \draw[pinkedge] (q_4)--(mq43);
              \draw[pinkedge] (q_5')--(p_3');
              \draw[pinkedge] (q_5)--(p_2');
              \draw[blueedge] (p_2)--(p_3);
              \draw[blueedge] (p_4)--(q_1);
              \draw[blueedge] (q_2)--(mq23);
              \draw[blueedge] (p_3')--(p_1');
              \draw[blueedge] (mq23')--(q_2');
              \draw[blueedge] (q_1')--(p_1);
              \draw[blueedge] (p_4')--(p_2');
              \draw[blueedge] (q_4')--(q_5');
              \draw[blueedge] (q_4)--(q_5);
              \draw[greenedge] (p_1)--(p_1');
              \draw[greenedge] (p_2)--(p_2');
              \draw[greenedge] (p_3)--(p_3');
              \draw[greenedge] (p_4)--(p_4');
              \draw[greenedge] (q_1)--(q_1');
              \draw[greenedge] (q_2)--(q_2');
              \draw[greenedge] (q_4)--(q_4');
              \draw[greenedge] (q_5)--(q_5');
              \foreach \pt/\labpos in {p_1/left,p_2/above,p_3/above,p_4/right,q_1/right,q_2/right,p_1'/below,p_3'/left,p_2'/right,p_4'/below,q_2'/left,q_1'/left}
                 \draw[fill=white] (\pt) circle (2.5pt) node[\labpos=2.5pt] {$\pt$};
              \draw[fill=white] (q_5) circle (2.5pt) node[right=2.5pt] {$q_m$};
              \draw[fill=white] (q_4) circle (2.5pt) node[right=2.5pt] {$q_{m-1}$};
              \draw (q_3) node[right=2.5pt] {$\vdots$};
              \draw[fill=white] (q_5') circle (2.5pt) node[left=2.5pt] {$q_m'$};
              \draw[fill=white] (q_4') circle (2.5pt) node[left=2.5pt] {$q_{m-1}'$};
              \draw (q_3') node[left=2.5pt] {$\vdots$};
    \end{tikzpicture}\hfill\text{}
    \caption{Cubic graph on $2k$ vertices with a proper $3$-edge-coloring for even $k\geq 6$ (top row images) and odd $k\geq7$ (bottow row images). Edges of outer Hamiltonian cycle alternate blue (solid) and pink (dashed). All other edges are green (dotted). Forbidden $4$-cycles, forbidden $6$-cycles, and rainbow triangles are avoided.}
    \label{fig:cubic}
\end{figure}
\begin{remark}\label{nogoodcubic10}
Note that \cref{lem:cubic} is not true for $k=5$.
Using \texttt{GraphData} in Mathematica \cite{Mathematica}, we find that, of the $21$ cubic graphs with $10$ vertices ($19$ of which are connected \cite{Bussemaker1977}), there are only five that are triangle-free and have edge-chromatic number $3$. Moreover, it can be verified that all possible proper edge-colorings of these graphs lead to a forbidden configuration.
\end{remark}

In \cref{mdim:ngeq7}, we show that for $n\geq 5$, the diagonal family $HG(n,n,n;3)$ achieves the metric dimension lower bound $2n-1$. To prove the cases $n\geq 7$, we make use of the graphs from \cref{lem:cubic}, and to prove the cases $n=5$ and $n=6$ we use ad hoc constructions.

\begin{proposition}\label{mdim:ngeq7}
  For $n\geq 5$, the Hamming graph $HG(n,n,n;3)$ has metric dimension $2n-1$.
\end{proposition}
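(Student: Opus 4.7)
The lower bound is already in hand from Theorem \ref{cor:lowerbound}. The plan is to establish the matching upper bound by constructing, for each $n\geq 5$, a resolving set of $HG(n,n,n;3)$ with exactly $2n-1$ landmarks. The primary tool is the forbidden subgraph characterization of Theorem \ref{lem:verboten}: if I can exhibit a $2$-basic landmark system $W$ for $HG(n-1,n-1,n-1;3)$ whose landmark graph $\mathcal{G}(W)$ is free of rainbow triangles, tricolored $4$-cycles, and $6$-cycles with opposite edges of the same color, then appending the single landmark $u=(n,n,n)$ yields a triple-looped landmark system of size $2(n-1)+1 = 2n-1$ that resolves $HG(n,n,n;3)$.

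For the main range $n\geq 7$, the strategy is to apply Lemma \ref{lem:cubic} with $k=n-1\geq 6$ to obtain a cubic graph $G$ on $2(n-1)$ vertices carrying a proper $3$-edge-coloring which avoids the three forbidden configurations. I would then realize $G$ as $\mathcal{G}(W)$ for a concrete $W\subseteq[n-1]^3$: because a proper $3$-edge-coloring of a cubic graph partitions its edges into three perfect matchings, I can label the $n-1$ edges in color $i$ by $1,\ldots,n-1$ and declare the $i$-th coordinate of a vertex of $W$ to be the label of its incident edge of color $i$. The resulting $W$ has $|W_{i,a}|=2$ and $|W_{i,a}\cap W_{j,b}|\leq 1$ (the latter because $G$ is simple), so $W$ is a $2$-basic landmark system with $\mathcal{G}(W)\cong G$; Lemma \ref{lem:verboten} then finishes this case.

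The remaining small cases $n=5$ and $n=6$ will require ad hoc constructions, and the $n=6$ case is where I expect the main obstacle. For $n=5$, I would search by hand among cubic graphs on $8$ vertices (e.g., the $3$-prism over a $4$-cycle, $K_{3,3}$-like bipartite structures, or a suitably colored Möbius--Kantor-style graph) for one whose $3$-edge-coloring avoids all three forbidden configurations; this is a small finite check and is likely to succeed via the same triple-loop extension recipe used for $n\geq 7$. For $n=6$, however, Remark \ref{nogoodcubic10} explicitly rules out the triple-looped approach with a cubic graph on $10$ vertices, so I cannot invoke Lemma \ref{lem:verboten} directly. Instead, Lemma \ref{howmany} tells me that any $2n-1 = 11$ element resolving set has landmark graph with exactly one loop of each color together with five plain edges of each color, and I would produce a candidate in which the three loops are distributed across two or three distinct vertices (rather than concentrated as a triple loop).

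Because such a landmark graph does not fall under the hypotheses of Theorem \ref{lem:verboten} or Theorem \ref{lem:basicverboten}, the final step for $n=6$ is to verify resolvability directly. The plan is to mimic the case analysis in the proof of Theorem \ref{lem:verboten}: for each pair of non-landmark vertices $\alpha,\beta$, enumerate the possible footprints (which now include additional shapes coming from spread-out loops) and check that no two distinct footprints can cover the same vertex set of $\mathcal{G}(W)$. The bookkeeping expands because the loop placements break the symmetry that made the triple-loop analysis clean, but the argument is finite and mechanical once the explicit construction is fixed.
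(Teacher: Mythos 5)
Your proposal follows essentially the same route as the paper: \cref{lem:cubic} plus the triple-loop extension of \cref{lem:verboten} for $n\geq 7$, an ad hoc properly $3$-edge-colored cubic graph on $8$ vertices for $n=5$, and a construction with the loops spread across distinct vertices, verified directly, for $n=6$. The only missing ingredients are the explicit witnesses in the two small cases, which the paper supplies as the M\"{o}bius ladder $M_4$ with a suitable $3$-edge-coloring (for $n=5$) and a computer-checked partial Latin square giving an $11$-landmark resolving set (for $n=6$); your outline of how to find them matches what the authors actually did.
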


\begin{proof}
    For $n\geq 7$, let $k=n-1$ and let $G$ be the graph constructed as in \cref{lem:cubic}.
    Label the blue edges $1,\ldots,n-1$ and likewise for the green and pink edges. If the blue, green, and pink edges incident to a vertex are labeled $a_1$, $a_2$, and $a_3$, respectively, then we create a landmark $(a_1,a_2,a_3)$. Do this for each vertex in $G$ to get a landmark set $W$ with graph $\mathcal{G}(W)=G$. The order $2(n-1)$ graph $G$ avoids forbidden $4$-cycles and $6$-cycles and rainbow triangles, so by \cref{lem:verboten}, if we let $u=(n,n,n)$, then the set $W\cup\{u\}$ is a resolving set of size $2n-1$ for $HG(n,n,n;3)$.
    By the lower bound in \cref{cor:lowerbound}, $W\cup\{u\}$ is a minimum resolving set.
    
    For $n=5$, note that \cref{lem:cubic} is also true for $k=4$. As illustrated in \cref{fig:fourfive}, there is a proper $3$-edge coloring of the order $8$ M\"{o}bius ladder that avoids rainbow triangles, forbidden $4$-cycles, and forbidden $6$-cycles. Thus we can resolve $HG(5,5,5;3)$ in the style of the previous paragraph, taking $G$ to be the colored M\"{o}bius ladder.
    
    For $n=6$, we take a different approach, as \cref{lem:cubic} is not true for $k=5$ (see \cref{nogoodcubic10}). In this case, rather than concentrating all loops at the same vertex, we spread them throughout the landmark graph. Letting symbol $k$ in row $i$, column $j$ correspond to a landmark $(i,j,k)$, we can verify by computer that the partial Latin square
    \[\begin{array}{|c|c|c|c|c|c|}
     \hline
      & 1& & & &  \\ \hline
      1 & 2 & & & &  \\ \hline
       & & 2& 3 & &  \\ \hline
       & & 3 &4 & &  \\ \hline
       &  & &  & 4& 5 \\ \hline
       & &  & & 5 & 6 \\ \hline
  \end{array}\]
     represents a resolving set of size $11$ for $HG(6,6,6;3)$.\footnote{Incidentally, our approach to constructing this resolving set of size $11$ was to attempt to prove there was none. We failed and found this one! Our code for checking resolving sets with SageMath \cite{SageMath} is available at \url{https://github.com/fostergreenwood/metric-dimension}.}
\end{proof}

\begin{figure}
    \centering
    \begin{tikzpicture}[scale=1.75] 
        \draw[blueedge] (337.5:.75)--(22.5:.75);
        \draw[blueedge] (67.5:.75)--(112.5:.75);
        \draw[blueedge] (157.5:.75)--(202.5:.75);
        \draw[blueedge] (247.5:.75)--(292.5:.75);
        \draw[pinkedge] (22.5:.75)--(67.5:.75);
        \draw[pinkedge] (112.5:.75)--(157.5:.75);
        \draw[pinkedge] (202.5:.75)--(247.5:.75);
        \draw[pinkedge] (292.5:.75)--(337.5:.75);
        \draw[greenedge] (67.5:.75)--(247.5:.75);
        \draw[greenedge] (22.5:.75)--(202.5:.75);
        \draw[greenedge] (337.5:.75)--(157.5:.75);
        \draw[greenedge] (112.5:.75)--(292.5:.75);
        \foreach \x in {1,...,8}
            \draw[fill=white] ({22.5+45*\x}:.75) circle (2pt);
    \end{tikzpicture}
    \caption{Proper $3$-edge coloring of the order eight M\"{o}bius ladder $M_4$. Edges of outer Hamiltonian cycle alternate blue (solid) and pink (dashed), while green (dotted) edges join antipodal points on the cycle. All forbidden configurations are avoided.}
    \label{fig:fourfive}
\end{figure}
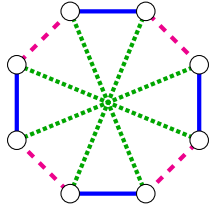

\subsection{Small exceptions}
In contrast to the diagonal cases $n\geq 5$, which can be resolved in $2n-1$ landmarks, we will show that for $n=3$ and $n=4$, the metric dimension of $HG(n,n,n;3)$ is $2n$. 

To show that $HG(n,n,n;3)$ cannot be resolved in $2n-1$ landmarks for $n=3,4$, we need to consider all possible subsets of $2n-1$ vertices, not only triple-looped landmark systems. With fewer constraints on $W$, there are more possible footprints than shown in \cref{fig:dinosaurtracks}---in general, a footprint may have double edges, and loops do not all have to be at the same vertex. In turn, there are more ways for two different footprints to cover the same set of vertices in the landmark graph, and such configurations indicate pairs of unresolved vertices in the Hamming graph. 

In addition to the forbidden $4$-cycles and $6$-cycles in \cref{fig:verboten}, the landmark graph of a resolving set has to avoid the configurations in \cref{fig:verbotenSnakes}.
The list is not exhaustive but includes what we need in order to prove \cref{threethreethree} and \cref{fourfourfour}. Of the configurations we require, we only show one representative coloring, with the understanding that the colors may be permuted.  Note that which edges are the same color is significant; for example, in the long $2$-headed snake, it is important that the middle two colors match the loop colors on the end, but in the opposite order. 

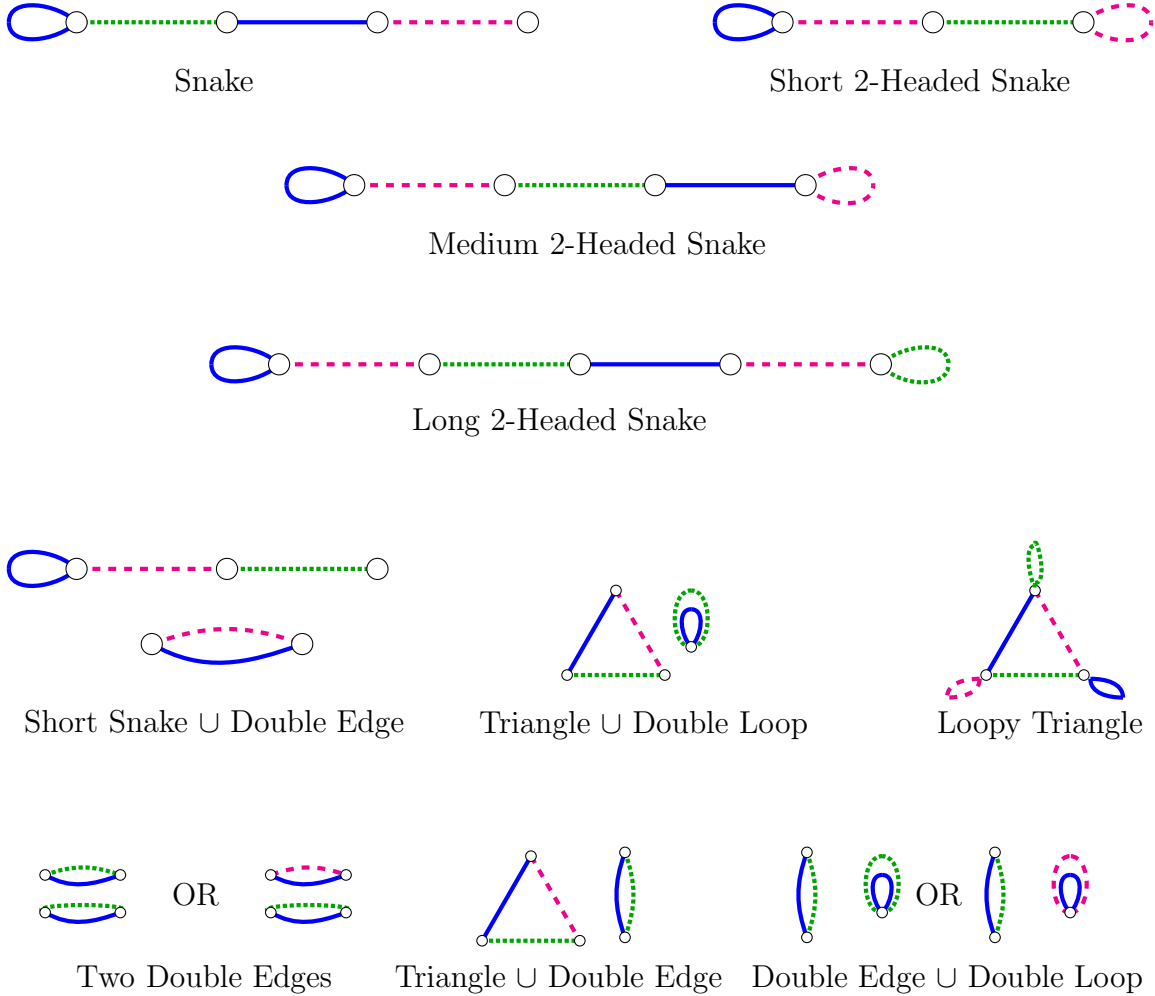
\begin{figure}
    \centering
\begin{tikzpicture}[scale=2] 
  \draw[greenedge] (0,0)--(1,0);
  \draw[blueedge] (1,0)--(2,0);
  \draw[pinkedge] (2,0)--(3,0);
  \draw[blueedge] (0,0) to[out=225,in=270] (-0.45,0);
  \draw[blueedge] (0,0) to[out=135,in=90] (-0.45,0);
  \draw[fill=white] (0,0) circle (2pt);
  \draw[fill=white] (1,0) circle (2pt);
  \draw[fill=white] (2,0) circle (2pt);
  \draw[fill=white] (3,0) circle (2pt);
  \node at (1.3,0,1) {Snake};
\end{tikzpicture}\hfill
\begin{tikzpicture}[scale=2]
    \draw[greenedge] (1,0)--(2,0);
    \draw[pinkedge] (0,0)--(1,0);
    \draw[blueedge] (0,0) to[out=225,in=270] (-0.45,0);
    \draw[blueedge] (0,0) to[out=135,in=90] (-0.45,0);
    \draw[pinkedge] (2,0) to[out=305,in=270] (2.45,0);
    \draw[pinkedge] (2,0) to[out=45,in=90] (2.45,0);
    \draw[fill=white] (0,0) circle (2pt);
    \draw[fill=white] (1,0) circle (2pt);
    \draw[fill=white] (2,0) circle (2pt);
    \node at (1.3,0,1) {Short 2-Headed Snake};
\end{tikzpicture}\hfill\vskip20pt
\begin{tikzpicture}[scale=2]
    \draw[greenedge] (1,0)--(2,0);
    \draw[pinkedge] (0,0)--(1,0);
    \draw[blueedge] (2,0)--(3,0);
    \draw[blueedge] (0,0) to[out=225,in=270] (-0.45,0);
    \draw[blueedge] (0,0) to[out=135,in=90] (-0.45,0);
    \draw[pinkedge] (3,0) to[out=305,in=270] (3.45,0);
    \draw[pinkedge] (3,0) to[out=45,in=90] (3.45,0);
    \draw[fill=white] (0,0) circle (2pt);
    \draw[fill=white] (1,0) circle (2pt);
    \draw[fill=white] (2,0) circle (2pt);
    \draw[fill=white] (3,0) circle (2pt);
    \node at (2,0,1) {Medium 2-Headed Snake};
\end{tikzpicture}\hfill\vskip26pt
\begin{tikzpicture}[scale=2]
    \draw[pinkedge] (0,0)--(1,0);
    \draw[greenedge] (1,0)--(2,0);
    \draw[blueedge] (2,0)--(3,0);
    \draw[pinkedge] (3,0)--(4,0);
    \draw[blueedge] (0,0) to[out=225,in=270] (-0.45,0);
    \draw[blueedge] (0,0) to[out=135,in=90] (-0.45,0);
    \draw[greenedge] (4,0) to[out=305,in=270] (4.45,0);
    \draw[greenedge] (4,0) to[out=45,in=90] (4.45,0);
    \draw[fill=white] (0,0) circle (2pt);
    \draw[fill=white] (1,0) circle (2pt);
    \draw[fill=white] (2,0) circle (2pt);
    \draw[fill=white] (3,0) circle (2pt);
    \draw[fill=white] (4,0) circle (2pt);
    \node at (2.25,0,1)  {Long 2-Headed Snake};
\end{tikzpicture}\hfill\vskip30pt
\begin{tikzpicture}[scale=2]
    \draw[pinkedge] (0,0)--(1,0);
    \draw[greenedge] (1,0)--(2,0);
    \draw[blueedge] (0,0) to[out=225,in=270] (-0.45,0);
    \draw[blueedge] (0,0) to[out=135,in=90] (-0.45,0);
    \draw[blueedge] (0.5,-0.5) to[out=330,in=200] (1.5,-0.5);
    \draw[pinkedge] (0.5,-0.5) to[out=20,in=160] (1.5,-0.5);  
    \draw[fill=white] (0,0) circle (2pt);
    \draw[fill=white] (1,0) circle (2pt);
    \draw[fill=white] (2,0) circle (2pt);
    \draw[fill=white] (0.5,-0.5) circle (2pt);
    \draw[fill=white] (1.5,-0.5) circle (2pt);
    \node at (1.3,-.65,1) {Short Snake $\cup$ Double Edge};
\end{tikzpicture}\hfill
\begin{tikzpicture}
    \draw[blueedge] ({330+120}:.75)--({330+240}:.75);
    \draw[greenedge] ({330+240}:.75)--({330}:.75);
    \draw[pinkedge] ({330+120}:.75)--({330}:.75);
    \foreach \x in {1,2,3}
        \draw[fill=white] ({330+120*\x}:.75) circle (2pt);
    \draw[blueedge] (1,0) to[out=135,in=180] (1,0.5);
    \draw[blueedge] (1,0) to[out=45,in=0] (1,0.5);
    \draw[greenedge] (1,0) to[out=180,in=180] (1,0.75);
    \draw[greenedge] (1,0) to[out=0,in=0] (1,0.75);
    \draw[fill=white] (1,0) circle (2pt);
    \node at (0.75,-.65,1) {Triangle $\cup$ Double Loop};
\end{tikzpicture}\hfill\hfill
\begin{tikzpicture}
    \draw[blueedge] ({330+120}:.75)--({330+240}:.75);
    \draw[greenedge] ({330+240}:.75)--({330}:.75);
    \draw[pinkedge] ({330+120}:.75)--({330}:.75);
    \foreach \x in {1,2,3}
        \draw[fill=white] ({330+120*\x}:.75) circle (2pt);
    \draw[blueedge] ({330}:0.85) to[out=0,in=90] ({330}:1.35);
    \draw[blueedge] ({330}:.85) to[out=270,in=180] ({330}:1.35);
    \draw[greenedge] (450:.85) to[out=270,in=180] (450:1.35);
    \draw[greenedge] (450:.85) to[out=0,in=90] (450:1.35);
    \draw[pinkedge] (570:.85) to[out=180,in=90] (570:1.35);
    \draw[pinkedge] (570:.85) to[out=270,in=0] (570:1.35);
    \node at (0.45,-.65,1) {Loopy Triangle};
\end{tikzpicture}\hfill\vskip36pt\hfill
\begin{tikzpicture}
    \draw[blueedge] (-3,0)to[out=330,in=200](-2,0);
    \draw[greenedge] (-2,0)to[out=20,in=160](-3,0);
    
    \draw[blueedge] (-3,.5) to[out=330,in=200] (-2,.5);
    \draw[greenedge] (-3,.5) to[out=20,in=160] (-2,.5);
  
    \draw[fill=white] (-3,0) circle (2pt);
    \draw[fill=white] (-2,0) circle (2pt);
    \draw[fill=white] (-3,.5) circle (2pt);
    \draw[fill=white] (-2,.5) circle (2pt);
  \node at (-1,0.25){OR};
    \draw[blueedge] (0,0)to[out=330,in=200](1,0);
    \draw[greenedge] (1,0)to[out=20,in=160](0,0);
    
    \draw[blueedge] (0,.5) to[out=330,in=200] (1,.5);
    \draw[pinkedge] (0,.5) to[out=20,in=160] (1,.5);
  
    \draw[fill=white] (0,0) circle (2pt);
    \draw[fill=white] (1,0) circle (2pt);
    \draw[fill=white] (0,.5) circle (2pt);
    \draw[fill=white] (1,.5) circle (2pt);
    \node at (-.5,-.5,1) {Two Double Edges};
\end{tikzpicture}\hfill
\begin{tikzpicture}
    \draw[blueedge] ({330+120}:.75)--({330+240}:.75);
    \draw[greenedge] ({330+240}:.75)--({330}:.75);
    \draw[pinkedge] ({330+120}:.75)--({330}:.75);
    \foreach \x in {1,2,3}
        \draw[fill=white] ({330+120*\x}:.75) circle (2pt);
    \draw[blueedge] (1.25,.8) to[out=250,in=110] (1.25,-0.33);
    \draw[greenedge] (1.25,.8) to[out=290,in=70] (1.25,-0.33);
    \draw[fill=white] (1.25,0.8) circle (2pt);
    \draw[fill=white] (1.25,-0.33) circle (2pt);
    \node at (0.75,-.5,1) {Triangle $\cup$ Double Edge};
\end{tikzpicture}\hfill
\begin{tikzpicture}
    \draw[blueedge] (-1.5,.8) to[out=250,in=110] (-1.5,-0.33);
    \draw[greenedge] (-1.5,.8) to[out=290,in=70] (-1.5,-0.33);   
    \draw[fill=white] (-1.5,0.8) circle (2pt);
    \draw[fill=white] (-1.5,-0.33) circle (2pt);
    \draw[blueedge] (-.5,0) to[out=135,in=180] (-.5,0.5);
    \draw[blueedge] (-.5,0) to[out=45,in=0] (-.5,0.5);
    \draw[greenedge] (-.5,0) to[out=180,in=180] (-.5,0.75);
    \draw[greenedge] (-.5,0) to[out=0,in=0] (-.5,0.75);    
    \draw[fill=white] (-0.5,0) circle (2pt);
        \node at (0.25,0.25){OR};
    \draw[blueedge] (1,.8) to[out=250,in=110] (1,-0.33);
    \draw[greenedge] (1,.8) to[out=290,in=70] (1,-0.33);
    \draw[fill=white] (1,0.8) circle (2pt);
    \draw[fill=white] (1,-0.33) circle (2pt);
    \draw[blueedge] (2,0) to[out=135,in=180] (2,0.5);
    \draw[blueedge] (2,0) to[out=45,in=0] (2,0.5);
    \draw[pinkedge] (2,0) to[out=180,in=180] (2,0.75);
    \draw[pinkedge] (2,0) to[out=0,in=0] (2,0.75);
    \draw[fill=white] (2,0) circle (2pt);
    \node at (0.75,-.5,1) {Double Edge $\cup$ Double Loop};
\end{tikzpicture}\hfill
    \caption{Some configurations of two different footprints covering the same set of vertices. Different line styles correspond to different edge colors. Configurations with only two colors imply the addition of any edge of the third color will cause an issue.}
    \label{fig:verbotenSnakes}
\end{figure}

\begin{lemma}\label{threethreethree}
    The Hamming graph $HG(3,3,3;3)$ has metric dimension $6$.
\end{lemma}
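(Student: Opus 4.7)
The plan is a two-sided argument. For the upper bound $\dim HG(3,3,3;3)\le 6$, the six-vertex resolving set pictured in \cref{fig:landmarkgraph} works: its landmark graph is a properly $3$-edge-colored $K_{3,3}$ with no rainbow triangle and no forbidden $4$- or $6$-cycle, so \cref{lem:basicverboten} (or a direct footprint check) certifies it as resolving. The matching lower bound $\dim HG(3,3,3;3)\ge 6$ is the content of the lemma, and is proved by assuming a resolving set $W$ of size $2n-1=5$ and deriving a contradiction.

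The first step in the lower bound is to pin down the block structure. Applying \cref{parallel} with $n=3$, each color's three block sizes sum to $5$ with every pair summing to at least $3$; the unique composition meeting this constraint is $(1,2,2)$. Hence in each color there is one singleton block (appearing as a loop in $\mathcal{G}(W)$) and two size-two blocks (plain edges), and $\mathcal{G}(W)$ is a graph on $5$ vertices with $3$ loops and $6$ plain edges in which every vertex is incident to exactly one edge of each color.

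Next comes a case analysis on how the three loops are distributed across the five vertices, which must be $(3,0,0,0,0)$, $(2,1,0,0,0)$, or $(1,1,1,0,0)$. In the triple-loop case, the four non-loop vertices support a properly $3$-edge-colored $K_4$, and every triangle of $K_4$ is rainbow; together with the triple loop this is the unresolved configuration of \cref{ex:tripleloop}. In the $(2,1,0,0,0)$ case, write $u$ for the double-loop vertex (say colors $i,j$) and $v$ for the single-loop vertex (color $k$). The color-$k$ plain edges contain an edge $ua$, forcing the other color-$k$ edge to be $bc$, where $a,b,c$ are the three remaining landmarks. Enumerating the nine possibilities for the color-$i$ and color-$j$ matchings on $\{v,a,b,c\}$ shows that either $bc$ picks up a second color (Double Edge + Double Loop), or two disjoint double edges appear among $\{vb,ac\}$ or $\{vc,ab\}$ (again Double Edge + Double Loop), or the triangle $vbc$ becomes rainbow (Triangle + Double Loop), each a forbidden subgraph from \cref{fig:verbotenSnakes}.

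The main obstacle is the $(1,1,1,0,0)$ case, where single loops sit at three distinct vertices $p,q,r$ (colors $1,2,3$ respectively) and the remaining two vertices $s,t$ each carry plain edges of all three colors. Each color's matching has three options, producing $27$ sub-configurations, which the plan is to organize by the multiplicity of the pair $\{s,t\}$. When $st$ is a triple edge, the triangle $pqr$ is rainbow with a single loop at each vertex, i.e., a Loopy Triangle. When $st$ has lower multiplicity, a routine footprint calculation produces an explicit unresolved pair---pairs of the form $(1,a_2,a_3)$ and $(2,a_2,a_3)$ are a convenient place to look, since they collapse whenever $W_{1,1}\cup W_{1,2}\subseteq W_{2,a_2}\cup W_{3,a_3}$---and each subcase ultimately matches one of the Two Double Edges, Triangle + Double Edge, or snake configurations of \cref{fig:verbotenSnakes}. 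This bookkeeping is the most technical part of the argument and could alternatively be discharged by a short computer enumeration in the spirit of the SageMath verification footnoted in \cref{mdim:ngeq7}.
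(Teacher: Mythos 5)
Your lower-bound decomposition (by how the three loops are distributed over the five vertices: $(3,0,0,0,0)$, $(2,1,0,0,0)$, $(1,1,1,0,0)$) is a genuinely different organization from the paper's, which splits first on whether $\mathcal{G}(W)$ contains a double edge and then on whether two of the loops coincide. Your triple-loop and double-loop cases are handled cleanly and correctly via \cref{ex:tripleloop} and the configurations of \cref{fig:verbotenSnakes} (modulo one small omission: in the $(3,0,0,0,0)$ case you assume the three perfect matchings on the four loopless vertices are distinct, i.e.\ that they form a $K_4$; if two coincide you get two disjoint double edges, which is also forbidden, but this subcase should be stated). The real gap is that the $(1,1,1,0,0)$ case with the $\{s,t\}$ pair of multiplicity $0$, $1$, or $2$ --- which is most of your $27$ sub-configurations and corresponds to the bulk of the paper's Cases 1.1, 1.2 and 2 --- is only a plan (``a routine footprint calculation\dots each subcase ultimately matches one of the\dots configurations''). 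Since the whole point of the lemma is that \emph{every} five-landmark configuration fails, this enumeration must actually be carried out (by hand, as the paper does, or by the computer search you mention); as written it is an assertion, not a proof.

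There is also an error in your upper bound. The resolving set of \cref{fig:landmarkgraph} is \emph{not} a $2$-basic landmark system: its blocks $W_{1,1}$ and $W_{1,2}$ each have three elements (and $W_{1,3}=\varnothing$), so its landmark graph is a hypergraph with two blue hyperedges of size three, not a properly $3$-edge-colored $K_{3,3}$, and \cref{lem:basicverboten} does not apply. This cannot be repaired by choosing a different set of six landmarks with all blocks of size two: a $2$-basic landmark system for $n=3$ would have as landmark graph a properly $3$-edge-colored cubic simple graph on six vertices ($K_{3,3}$ or the prism), and every proper $3$-edge-coloring of either contains a $4$-cycle using all three colors, so no such system resolves $HG(3,3,3;3)$ by \cref{lem:basicverboten}. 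The exhibited set is in fact resolving, but certifying that requires the ``direct footprint check'' you mention in passing (or the computer verification the paper invokes), not the theorem you lean on.
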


\begin{proof}
Letting symbol $k$ in row $i$, column $j$ corresponds to a landmark $(i,j,k)$, we can verify by computer that the partial Latin square
\[\begin{array}{|c|c|c|}
  \hline
     1  & 2 & 3\\ \hline
     3  & 1 & 2\\ \hline
    \phantom{2}   & \phantom{1}& \phantom{3}\\ \hline
  \end{array}\]
represents a resolving set of size $6$ (whose landmark graph is in \cref{fig:landmarkgraph}).
  
To show there is no smaller resolving set, suppose, towards a contradiction, that we have a resolving set $W$ with only $5$ vertices. Then by \cref{howmany}, the graph of $W$ has 2 plain edges and a loop for each of three colors (say blue, green, pink). We will show that all possible placements of the edges and loops lead to a forbidden configuration.

Case 1 (no double edges). First suppose the graph of $W$ has no double edges.

 Case 1.1 (blue and green loop on same vertex).  If the blue and green loops are on the same vertex, say $u$, then the remaining two blue edges and two green edges are forced to create an alternating $4$-cycle. Note that distinct vertices in the $4$-cycle cannot be joined by a pink edge since that would create either a double edge or a forbidden $C_3\cup L_2$. Thus every pink plain edge must have $u$ as one of its endpoints. But this means we can only have one pink plain edge instead of the two required.
 
 Case 1.2 (blue and green loop on different vertices).  Suppose the graph of $W$ has a blue loop at vertex $b$ and a green loop at vertex $g$. Now there must be a green edge, say $bu$, and a blue edge, say $gv$. Note that if we had $u=v$, then the remaining blue and green edges would create a double edge. Thus we have $u\neq v$. Now, if the remaining vertex is labeled $w$, we are forced to have a blue edge $wu$ and a green edge $wv$. We claim there is no way to place a pink at $w$.  A loop at $w$ gives  short two-headed snakes ($buw$ and $gvw$). A plain pink edge from $w$ causes a double edge ($uw$ or $vw$) or a snake ($buwg$ or $gvwb$).
 
 Case 2 (double edge exists). Next, suppose the graph of $W$ has a double edge. Without loss of generality, say it is a blue-green double edge between vertices $u$ and $v$. Then, to avoid a forbidden configuration, the blue and green loops must be on different vertices, say $b$ and $g$. If we label the final vertex as $w$, the only way to place the remaining blue and green edges is with a green edge $bw$ and a blue edge $gw$. We claim there is no way to place a pink at $b$. A plain pink edge 
 $bu$ or $bv$ give us a snake with head $g$. A pink edge $bw$ creates a second double edge. A pink loop at $b$ creates a $D_2\cup L_2$. A pink edge $bg$ creates a forbidden $D_2\cup C_3$. 
 
 Since all possibilities lead to forbidden configurations, there is no way to resolve the Hamming graph $HG(3,3,3;3)$ using only five landmarks.
\end{proof}

\begin{lemma}\label{fourfourfour}
  The Hamming graph $HG(4,4,4;3)$ has metric dimension $8$.
\end{lemma}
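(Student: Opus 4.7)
I would prove $\dim HG(4,4,4;3) = 8$ by combining a direct upper-bound construction with a case analysis ruling out resolving sets of size $7$.

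\textbf{Upper bound.} I would exhibit a $2$-basic landmark system $W$ with $|W|=8$ and apply \cref{lem:basicverboten}. The order-$8$ M\"{o}bius ladder $M_4$ drawn in \cref{fig:fourfive} already carries a proper $3$-edge-coloring avoiding all forbidden $4$- and $6$-cycles; labeling the blue, green, and pink edges $1,2,3,4$ respectively, and associating to each vertex the triple of labels of its three incident edges, produces $8$ triples in $[4]^3$ that constitute the required $2$-basic landmark system.

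\textbf{Lower bound.} Suppose toward a contradiction that $W$ resolves $HG(4,4,4;3)$ with $|W|=7$. By \cref{howmany}, $\mathcal{G}(W)$ has exactly one loop and three plain edges of each color, and the three plain edges of a given color form a perfect matching on the six vertices avoiding that color's loop. I would split on the loop placement. In \emph{Case A}, all three loops sit at a common vertex $u$, so $W\setminus\{u\}$ is a $2$-basic landmark system for $HG(3,3,3;3)$; by \cref{lem:verboten} its graph $H$ would then be a simple cubic graph on six vertices with a proper $3$-edge-coloring avoiding rainbow triangles and forbidden $4$- and $6$-cycles. Up to isomorphism $H$ is one of $K_{3,3}$, the triangular prism $K_3\Box K_2$, or $2K_3$; the latter two contain triangles, which are necessarily rainbow in any proper $3$-edge-coloring, and for $K_{3,3}$ a short pigeonhole argument on the $2\times 2$ sub-squares of the associated Latin square of order $3$ forces some $4$-cycle to use all three colors. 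Hence Case~A is impossible.

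\textbf{Case B and main obstacle.} In \emph{Case B}, the three loops are not concentrated at a single vertex, and I would further subdivide by loop-multiplicity pattern ($2+1$ or $1+1+1$). In the style of \cref{threethreethree}, I would place the loops and the three color-matchings incrementally, ruling out each configuration by producing either a forbidden $4$- or $6$-cycle from \cref{fig:verboten} or one of the snake, two-headed snake, loopy triangle, or triangle-plus-double-edge patterns collected in \cref{fig:verbotenSnakes}. The main difficulty is precisely this case: the number of placements on seven vertices is substantial, so I expect to reduce the workload via color- and coordinate-symmetries and, as the authors do for $HG(6,6,6;3)$ in \cref{mdim:ngeq7}, to finish residual configurations with a short computer verification. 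Combining both bounds gives $\dim HG(4,4,4;3) = 8$.
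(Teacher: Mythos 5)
Your upper bound is exactly the paper's: read the $8$ landmarks off the properly $3$-edge-colored M\"{o}bius ladder $M_4$ and invoke \cref{lem:basicverboten}. Your Case~A is a nice, genuinely different argument from anything in the paper: once all three loops sit at $u$, reducing to a $6$-vertex cubic graph and observing that the prism forces a rainbow triangle while every proper $3$-edge-coloring of $K_{3,3}$ (equivalently, every order-$3$ Latin square, all isotopic to the $\ZZ_3$ table) contains a forbidden $4$-cycle is clean and correct. Two small repairs there: $2K_3$ is $2$-regular, not cubic, so the only candidates are $K_{3,3}$ and the prism; and before declaring $W\setminus\{u\}$ a $2$-basic landmark system you must rule out double edges, which does follow quickly because a monochromatic-pair double edge together with the corresponding two loops at $u$ is the forbidden ``double edge $\cup$ double loop'' configuration of \cref{fig:verbotenSnakes}.

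The genuine gap is Case~B, and it is not a peripheral case --- it is where essentially all of the lower-bound work lives. \cref{howmany} does not force the loops to coincide, so the configurations with loop pattern $2+1$ or $1+1+1$ (and the configurations containing double edges) must each be driven to a forbidden pattern, and your proposal only announces an intention to ``place the loops and matchings incrementally'' and, failing that, to ``finish residual configurations with a short computer verification.'' That is a plan, not a proof: no cases are actually enumerated, no forced edges are derived, and no contradiction is exhibited. The paper shows this can be done by hand in four sub-cases (two loops of different colors on the same vertex with the third elsewhere; all loops distinct, split by whether the two forced neighbors $u,v$ of the loop vertices coincide; and the double-edge case), each ending in a snake, two-headed snake, or cycle from \cref{fig:verboten,fig:verbotenSnakes}. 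Until you either carry out that analysis or actually supply and document the computer check, the claim $\dim HG(4,4,4;3)>7$ is unproved in your write-up, so the equality $\dim HG(4,4,4;3)=8$ does not yet follow.
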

\begin{proof} 
To construct a resolving set, let $M_4$ be the order $8$ edge-colored M\"{o}bius ladder in \cref{fig:fourfive}.  Number the blue (likewise, green and pink) edges $1,\ldots,4$.
If the blue, green, and pink edges incident to a vertex are labeled $a_1$, $a_2$, and $a_3$, respectively, then we create a landmark $(a_1,a_2,a_3)$. Do this for each vertex in $M_4$ to get a landmark set $W$ with graph $\mathcal{G}(W)=M_4$. By \cref{lem:basicverboten}, the set $W$ is a resolving set of size $8$ for $HG(4,4,4;3)$.
  
  To show there is no smaller resolving set, suppose, towards a contradiction, that we have a resolving set $W$ with only $7$ vertices. Then by \cref{howmany}, the graph of $W$ has for each color three plain edges and a loop.
   
   Case 1 (no double edges). First suppose the graph of $W$ has no double edges.
   
   Case 1.1 (blue and green loop on same vertex). Suppose the blue and green loops are on the same vertex, say $u$. Covering the six remaining vertices, we have three blue edges that make a perfect matching and three green edges that make a perfect matching. Since we are in the case of no double edges, it is straightforward to verify that the edges of the two matchings must combine to create an alternating blue-green $6$-cycle.
   
   Now we try to determine the pink edges. Note that distinct vertices in the $6$-cycle cannot be joined by a pink edge since that would create either a double edge, a $C_3\cup L_2$, or a forbidden $4$-cycle. Thus every pink plain edge must have $u$ as one of its endpoints. But this means we can only have one pink plain edge instead of the three required.
   
   Case 1.2 (blue and green loops on different vertices). Suppose the graph of $W$ has a blue loop at vertex $b$ and a green loop at vertex $g$. Now there must be a green edge, say $bu$, and a blue edge, say $gv$.
   
   Case 1.2.1 ($u=v$). If $u=v$, then the remaining blue and green edges create an alternating $4$-cycle. Note that we cannot have a pink edge from $b$ or $g$ to a vertex on the $4$-cycle, since that would create a snake with head $g$ or $b$. We also cannot have a pink loop at $b$ or $g$ since that creates a short two-headed snake $bug$. To avoid double edges, the only remaining option is a pink edge $bg$. Now, we cannot have a pink loop at $u$ (since it would create a triangle with a loop at each vertex), so there must be a pink edge from $u$ to a vertex, say $w$, on the blue-green $4$-cycle. Since we can't have a double edge, the pink loop is forced to be at the vertex $w'$ on the $4$-cycle that is nonadjacent to $w$. But now there is a long two-headed snake (with heads $b$ and $w'$).
   
   Case 1.2.2 ($u\neq v$). If $u\neq v$, then we have a blue edge, say $uu'$ and a green edge $vv'$. Note that $u'\neq v'$ since $u'=v'$ would force a double edge with the remaining blue and green edges. There is only one vertex remaining, call it $w$, and we are forced to have a green edge $wu'$ and a blue edge $wv'$.
   
   Now we consider pink edges at $u'$ and $v'$. Due to the path $buu'$, we cannot have a pink loop at $u'$ (would create a short two-headed snake), and the only way to avoid a double edge or a snake with head $b$ is to put a pink edge $u'b$. Similarly, we must have a pink edge $v'g$. But now we have a long two-headed snake $bu'wv'g$.
   
   Case 2 (there exists a double edge). By symmetry of permuting colors, it suffices to consider the case of a blue-green double edge, say between vertices $u$ and $v$. Then the blue and green loops must be at distinct vertices, say $b$ and $g$ (otherwise we obtain $D_2\cup L_2$). Now, there is a green edge from $b$ to some vertex $u'$ and a blue edge from $g$ to some vertex $v'$. If $u'=v'$, we would be forced to make a blue-green double edge on the remaining two vertices which gives two double edges which is forbidden, so it must be that $u'\neq v'$. Similar to Case 1.2.2, there is only one vertex remaining, call it $w$, and we are forced to create a blue edge $u'w$ and a green edge $v'w$.
   
   Now we consider pink edges. Note that at $w$, we cannot have a pink loop since we would get a short two-headed snake $bu'w$. We also cannot have a pink edge $wu$ or $wv$ since that would create a snake with head $b$ or $g$.  A pink edge $wu'$ or $wv'$ would create two double edges.
   Lastly, a pink edge $wb$ or $wg$ would create a $C_3\cup D_2$. Thus there is no way to place a pink at $w$. This completes the case where the graph of $W$ has a double edge.
   
   Since all possibilities lead to forbidden configurations, there is no way to resolve the Hamming graph $HG(4,4,4;3)$ using only seven landmarks.
\end{proof}

\subsection{Conclusion and further directions}

Compiling the results of the previous two subsections, we now have the metric dimension for all graphs in the diagonal family $HG(n,n,n;3)$ for $n\geq 3$.

\begin{theorem}\label{thm:ComprehensiveMetDim}
For $n\geq 3$, the metric dimension of the Hamming graph $HG(n,n,n;3)$ is
  \[\dim HG(n,n,n;3)=\begin{cases}
     2n & \text{if $n\in\{3,4\}$} \\
     2n-1 & \text{if $n\geq5$.}
  \end{cases}\]
\end{theorem}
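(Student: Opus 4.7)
The proof is essentially a bookkeeping step that assembles the results already established in the preceding two subsections, so my plan is to state this clearly and cite the three component results.

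First I would split the proof into the two cases indicated in the statement. For the case $n \in \{3, 4\}$, I would invoke \cref{threethreethree} and \cref{fourfourfour} directly, which give $\dim HG(3,3,3;3) = 6 = 2\cdot 3$ and $\dim HG(4,4,4;3) = 8 = 2\cdot 4$, respectively. Each of those lemmas already contains both an explicit resolving set achieving the stated value and an exhaustive case analysis ruling out smaller resolving sets, so no further work is needed here.

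For the case $n \geq 5$, I would cite \cref{mdim:ngeq7}, which constructs a resolving set of size $2n-1$ (via the cubic graphs of \cref{lem:cubic} for $n \geq 7$, the M\"{o}bius ladder $M_4$ for $n = 5$, and the explicit partial Latin square of order $6$ for $n = 6$) and combines with the lower bound of \cref{cor:lowerbound} to conclude that $\dim HG(n,n,n;3) = 2n - 1$ exactly.

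The main obstacle is conceptual rather than technical: there is no new mathematics to carry out at this stage, since all the substantive content, namely the lower bound argument in \cref{sec:lowerbound}, the forbidden-subgraph characterization in \cref{lem:basicverboten} and \cref{lem:verboten}, and the explicit landmark constructions in \cref{mdim:ngeq7}, \cref{threethreethree}, and \cref{fourfourfound}, has already been established. The only care needed is to cite \cref{threethreethree}, \cref{fourfourfour}, and \cref{mdim:ngeq7} and to verify that these three results collectively cover every $n \geq 3$ with no gaps or overlaps in their conclusions, which is clear from the case split $n=3$, $n=4$, $n\geq 5$.
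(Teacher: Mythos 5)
Your proposal is correct and matches the paper's proof exactly: the paper simply deduces the theorem as a corollary of \cref{mdim:ngeq7}, \cref{threethreethree}, and \cref{fourfourfour}, which is precisely the case split and citation structure you describe. (The only blemish is a typo, ``fourfourfound,'' in your last paragraph, which would break the cross-reference if left in.)
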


\begin{proof}
    This is a corollary of \cref{mdim:ngeq7}, \cref{threethreethree}, and \cref{fourfourfour}.
\end{proof}

We are currently working to generalize the results of \cref{sec:landmarksystems} and construct minimum resolving sets for non-diagonal graphs $HG(n_1,n_2,n_3;3)$. These graphs are of special interest because when the $n_i$'s are distinct odd primes, the graphs are isomorphic to unitary Cayley graphs \cite{Sander2010}. We know that at least some of these graphs also achieve the lower bound of \cref{cor:lowerbound}, as illustrated in our final example.
  
\begin{example}
 Letting symbol $k$ in row $i$, column $j$ correspond to a landmark $(i,j,k)$, we can check by computer that the partial Latin rectangle
  \[\begin{array}{|c|c|c|c|c|c|c|}
  \hline
     1  & 2 & 3 & 10 & & &  \\
     \hline
     4  & 5 & 6 & 1 & 2 & 3 & \\
     \hline
     7 & 8 & 9 & 4 & 5 & 6 & \\
     \hline
     10 & & & 7 & 8 & 9 & \phantom{10} \\
     \hline
      \phantom{10} &\phantom{10} &\phantom{10} &\phantom{10} & \phantom{10}&\phantom{10} & 11 \\
       \hline
  \end{array}\]
  represents a minimum resolving set of size $21$ for the graph $HG(5,7,11;3)$. 
\end{example}

\bibliographystyle{alphaurl}
\bibliography{metricdimension}

\newcommand{\etalchar}[1]{$^{#1}$}
\begin{thebibliography}{CHM{\etalchar{+}}07}

\bibitem[BGKT19]{Bresar2019}
Bo\v{s}tjan Bre\v{s}ar, Nicolas Gastineau, Sandi Klav\v{z}ar, and Olivier
  Togni.
\newblock Exact distance graphs of product graphs.
\newblock {\em Graphs Combin.}, 35(6):1555--1569, 2019.
\newblock \href {https://doi.org/10.1007/s00373-019-02089-0}
  {\path{doi:10.1007/s00373-019-02089-0}}.

\bibitem[BvCS77]{Bussemaker1977}
F.~C. Bussemaker, S.~\v{C}obelji\'{c}, D.~M. Cvetkovi\'{c}, and J.~J. Seidel.
\newblock Cubic graphs on {$\leq 14$} vertices.
\newblock {\em J. Combinatorial Theory Ser. B}, 23(2-3):234--235, 1977.
\newblock \href {https://doi.org/10.1016/0095-8956(77)90034-x}
  {\path{doi:10.1016/0095-8956(77)90034-x}}.

\bibitem[CHM{\etalchar{+}}07]{Caceres2007}
Jos\'{e} C\'{a}ceres, Carmen Hernando, Merc\`e Mora, Ignacio~M. Pelayo,
  Mar\'{\i}a~L. Puertas, Carlos Seara, and David~R. Wood.
\newblock On the metric dimension of {C}artesian products of graphs.
\newblock {\em SIAM J. Discrete Math.}, 21(2):423--441, 2007.
\newblock \href {https://doi.org/10.1137/050641867}
  {\path{doi:10.1137/050641867}}.

\bibitem[CM66]{CantorMills1966}
David~G. Cantor and W.~H. Mills.
\newblock Determination of a subset from certain combinatorial properties.
\newblock {\em Canadian J. Math.}, 18:42--48, 1966.
\newblock \href {https://doi.org/10.4153/CJM-1966-007-2}
  {\path{doi:10.4153/CJM-1966-007-2}}.

\bibitem[ER63]{ErdosRenyi1963}
Paul Erd\H{o}s and Alfr\'{e}d R\'{e}nyi.
\newblock On two problems of information theory.
\newblock {\em Magyar Tud. Akad. Mat. Kutat\'{o} Int. K\"{o}zl.}, 8:229--243,
  1963.

\bibitem[HM76]{HararyMelter1976}
Frank Harary and Robert~A. Melter.
\newblock On the metric dimension of a graph.
\newblock {\em Ars Combin.}, 2:191--195, 1976.

\bibitem[Inc]{Mathematica}
Wolfram~Research{,} Inc.
\newblock Mathematica, {V}ersion 12.0.
\newblock Champaign, IL, 2019.

\bibitem[JLL19]{Junnila2019}
Ville Junnila, Tero Laihonen, and Tuomo Lehtil\"{a}.
\newblock On a conjecture regarding identification in {H}amming graphs.
\newblock {\em Electron. J. Combin.}, 26(2):Paper No. 2.45, 25, 2019.
\newblock \href {https://doi.org/10.37236/7828} {\path{doi:10.37236/7828}}.

\bibitem[JP19]{Jiang2019}
Zilin Jiang and Nikita Polyanskii.
\newblock On the metric dimension of cartesian powers of a graph.
\newblock {\em Journal of Combinatorial Theory, Series A}, 165:1--14, 2019.
\newblock URL:
  \url{https://www.sciencedirect.com/science/article/pii/S0097316519300032},
  \href {https://doi.org/https://doi.org/10.1016/j.jcta.2019.01.002}
  {\path{doi:https://doi.org/10.1016/j.jcta.2019.01.002}}.

\bibitem[KY21]{Kuziak2021}
Dorota Kuziak and Ismael~G. Yero.
\newblock Metric dimension related parameters in graphs: A survey on
  combinatorial, computational and applied results, 2021.
\newblock URL: \url{https://arxiv.org/abs/2107.04877}, \href
  {https://doi.org/10.48550/ARXIV.2107.04877}
  {\path{doi:10.48550/ARXIV.2107.04877}}.

\bibitem[Lin64]{Lindstrom1964}
Bernt Lindstr\"{o}m.
\newblock On a combinatory detection problem. {I}.
\newblock {\em Magyar Tud. Akad. Mat. Kutat\'{o} Int. K\"{o}zl.}, 9:195--207,
  1964.

\bibitem[LTBL20]{Laird2020}
Lucas Laird, Richard~C. Tillquist, Stephen Becker, and Manuel~E. Lladser.
\newblock Resolvability of {H}amming graphs.
\newblock {\em SIAM J. Discrete Math.}, 34(4):2063--2081, 2020.
\newblock \href {https://doi.org/10.1137/19M1274511}
  {\path{doi:10.1137/19M1274511}}.

\bibitem[LY22]{Lu2022}
Changhong Lu and Qingjie Ye.
\newblock A bridge between the minimal doubly resolving set problem in (folded)
  hypercubes and the coin weighing problem.
\newblock {\em Discrete Applied Mathematics}, 309:147--159, 2022.
\newblock URL:
  \url{https://www.sciencedirect.com/science/article/pii/S0166218X21004686},
  \href {https://doi.org/https://doi.org/10.1016/j.dam.2021.11.016}
  {\path{doi:https://doi.org/10.1016/j.dam.2021.11.016}}.

\bibitem[San10]{Sander2010}
Torsten Sander.
\newblock Eigenspaces of {H}amming graphs and unitary {C}ayley graphs.
\newblock {\em Ars Math. Contemp.}, 3(1):13--19, 2010.
\newblock \href {https://doi.org/10.26493/1855-3974.100.7f8}
  {\path{doi:10.26493/1855-3974.100.7f8}}.

\bibitem[Sla75]{Slater1975}
Peter~J. Slater.
\newblock Leaves of trees.
\newblock In {\em Proceedings of the {S}ixth {S}outheastern {C}onference on
  {C}ombinatorics, {G}raph {T}heory, and {C}omputing ({F}lorida {A}tlantic
  {U}niv., {B}oca {R}aton, {F}la., 1975)}, pages 549--559. Congressus
  Numerantium, No. XIV, 1975.

\bibitem[TFL21]{Tillquist2021}
Richard~C. Tillquist, Rafael~M. Frongillo, and Manuel~E. Lladser.
\newblock Getting the lay of the land in discrete space: A survey of metric
  dimension and its applications, 2021.
\newblock URL: \url{https://arxiv.org/abs/2104.07201}, \href
  {https://doi.org/10.48550/ARXIV.2104.07201}
  {\path{doi:10.48550/ARXIV.2104.07201}}.

\bibitem[{The}22]{SageMath}
{The Sage Developers}.
\newblock {\em {S}ageMath, the {S}age {M}athematics {S}oftware {S}ystem
  ({V}ersion 9.6)}, 2022.
\newblock {\tt https://www.sagemath.org}.
\newblock \href {https://doi.org/https://doi.org/10.5281/zenodo.593563}
  {\path{doi:https://doi.org/10.5281/zenodo.593563}}.

\end{thebibliography}

\end{document}